\definecolor{purple}{rgb}{.5,0,1}
\newtheorem{theorem}{Theorem}[section]
\theoremstyle{definition}
\newtheorem{definition}[theorem]{Definition}
\newtheorem{example}[theorem]{Example}
\newtheorem{proposition}[theorem]{Proposition}
\newtheorem{corollary}[theorem]{Corollary}
\newtheorem{lemma}[theorem]{Lemma}
\newtheorem{remark}[theorem]{Remark}
\newtheorem*{question}{Question}
\newcommand{\bbN}{\mathbb{N}}
\newcommand{\bbR}{\mathbb{R}}
\newcommand{\bbZ}{{\mathbb{Z}}}
\newcommand{\rmD}{{\mathrm{D}}}
\newcommand{\tilF}{{\widetilde{F}}}
\newcommand{\tilK}{{\widetilde{K}}}
\newcommand{\dist}{{\mathrm{dist}}}
\newcommand{\diam}{{\mathrm{diam}}}
\newcommand{\RV}{{\Lambda}}
\newcommand{\BRV}{{\mathrm{BRV}}}
\newcommand{\TRV}{{\mathrm{TRV}}}
\newcommand{\set}[1]{\left\{{#1}\right\}}
\numberwithin{equation}{section}
\title{On Sums of Semibounded Cantor Sets}
\author[J.\ Fillman]{Jake Fillman}
\author[S.\ H.\ Tidwell]{Sara H.\ Tidwell}
\date{}
\thanks{J.F.\ was supported in part by Simons Foundation Collaboration Grant \#711663. J.F.\ and S.H.T.\ were supported in part by undergraduate research funding from the Texas State University Research Enhancement Program.}
\begin{document}
\maketitle

\begin{abstract}
Motivated by questions arising in the study of the spectral theory of models of aperiodic order, we investigate sums of functions of semibounded closed subsets of the real line. We show that under suitable thickness assumptions on the sets and growth assumptions on the functions, the sums of such sets contain half-lines. We also give examples to show our criteria are sharp in suitable regimes.
\end{abstract}

\setcounter{tocdepth}{1}
\tableofcontents
\hypersetup{
	linkcolor={black!30!blue},
	citecolor={red},
	urlcolor={black!30!blue}
}

\section{Introduction}
\subsection{Background and Motivation}
The present paper is concerned with the following question:
\begin{question}
If $F \subseteq \bbR$ is closed and bounded from below and $g:[\inf(F),\infty) \to \bbR$ is increasing and continuous, under what conditions (on $F$, $g$, or both) does $g[F] + g[F]$ contain a half-line?
\end{question}

This question has its roots in some recent work on spectral theory of multidimensional quasicrystals. To set the stage, we will explain how this question arose and then give an answer: If $F$ is thick  and the relative Lipschitz behavior of $g$ is controllable (in senses to be made precise later), then $g[F] + g[F]$ contains a half-line. Moreover, we will show that these hypotheses are necessary by exhibiting examples in which they fail and the resultant sums do not contain half-lines.

Since their discovery in the 1980s by Shechtman \textit{et al.}\ \cite{SBGC1984PRL}, quasicrystals\footnote{That is, mathematical or physical structures simultaneously exhibiting aperiodicity and long-range order.} have been studied intensively by mathematicians and physicists. We direct the readers to the books \cite{BaaGri2013Vol1, BaaMoo2000} and the references therein for background. This paper is motivated by questions that arise when one studies the spectra of multidimensional quasicrystal models. In particular, one asks whether the spectra of multidimensional continuum quasicrystal models must contain a half-line. This question itself is motivated by the corresponding question for crystalline (i.e., periodic) models, called the \emph{Bethe--Sommerfeld conjecture}. The conjecture is now a theorem with progress by many authors \cite{HelMoh98, Karp97, PopSkr81, Skr79, Skr84, Skr85, Vel88}, with a full resolution by Parnovski \cite{Parn2008AHP}.

To study electronic properties of quasicrystals, one often considers a single-particle Hamiltonian in $L^2(\bbR^d)$ of the form
\[ L_V = -\Delta + V\]
in which the potential $V:\bbR \to \bbR$ is pattern-equivariant with respect to a suitable model of aperiodic order (e.g., the Fibonacci or Penrose tilings).
 Such models have been heavily studied in dimension $d=1$, where there is a cornucopia of tools available to study the relevant spectral theory, and as a consequence, one has many results about the spectrum, spectral type, and density of states; see, e.g. \cite{Bel1990, BelBovGhe1991, BelIocScoTes1989CMP, DamanikGorodetski2011CMP, DamGor2012GAFA, DamGorYes2016Invent, LiuQu2015CMP, LiuQuYao2017JSP, Suto1987CMP, Suto1989JSP}, for a small sample of the literature. There are comparatively fewer spectral results in higher dimensions, partially due to the disappearance of some of the key tools in the transition from dimension $d=1$ to dimensions $d\geq 2$; compare, e.g., \cite{HMTpreprint, Hof1993JSP, Hof1995JSP, KLS2003CMP, LenzStol2003OAAMP, LenzStol2005JAM}. 
 One fruitful method to push results into higher dimensions is to study \emph{separable} potentials, that is, potentials of the form
\[ V(x) = \sum_{j=1}^d V_j(x_j), \]
where each $V_j:\bbR \to \bbR$ is a well-understood one-dimensional model. In particular, for the spectrum, this results in
\[ \sigma(L_V) = \sigma(L_{V_1}) + \sigma(L_{V_2})+ \cdots + \sigma(L_{V_d}) = \set{\sum_{j=1}^d \lambda_j : \lambda_j \in \sigma(L_{V_j})},\]
the Minkowski sum of sets. 

On one hand, the spectra of one-dimensional  quasicrystal models have a strong tendency to be zero-measure Cantor sets \cite{BelBovGhe1991, BelIocScoTes1989CMP, DFG2014AHP, DamLen2006DMJ, LenSeiSto2014JDE, Suto1987CMP, Suto1989JSP}. On the other hand, the self-sums of fractal sets of zero Lebesgue measure can have nonempty interior. For instance, it is well-known that if $K \subseteq [0,1]$ denotes the standard middle-thirds Cantor set, one has $K+K=[0,2]$. In particular, the self-sum of a zero-measure set may be an interval.

In the recent work \cite{DFG2021JFA}, it was shown that if $V_j(x)$ denotes a suitable locally constant version of the Fibonacci potential, then $\sigma(L_V)$ contains a half-line for each $d \geq 2$. One of the key ingredients of the proof was an affirmative answer to the main question for the function $g(x)=x^2$ and Cantor sets sufficiently thick in the sense of Newhouse \cite{Newhouse1970Proc, Newhouse1979PMIHES}. This is how Minkowski sums of unbounded fractal sets arise in spectral theory.

In the present work, we will demonstrate classes of functions and sets for which the result of \cite{DFG2021JFA} holds. We give a class of examples for which one can observe a sharp phase transition between containing and not-containing a half-line.

\subsection{Definitions and Results}
Let us begin by defining terminology and recalling some results that will be useful. 

We first discuss the class of functions with which we work. In the sequel, we will consider closed sets and functions that are bounded from below. Consequently, after shifting, we are free to assume without loss of generality that all sets are contained in $\bbR_+ = [0,\infty)$ and that all functions map $\bbR_+$ to itself.

Loosely speaking, the class of functions for which the result holds true is that of monotonic locally Lipschitz function for which the local Lipschitz constants do not vary too quickly in a relative sense, which we make precise in the following definition.

\begin{definition}
We say that $g: \bbR_+ \to \bbR_+$ is an \emph{admissible function} if it is continuous, strictly increasing, and satisfies $\lim_{x\to\infty}g(x)=\infty$. For $x \in \bbR_+$, let $\rmD^\pm$ denote the upper and lower derivatives:
\[ \rmD^+(g,x) = \limsup_{z \to x} \frac{g(z)-g(x)}{z-x}, \quad \rmD^-(g,x) = \liminf_{z \to x} \frac{g(z)-g(x)}{z-x}. \]
Given an admissible function $g$ and constants $\gamma>0$ and $M\geq 0$, define
\begin{equation}
\RV(g,\gamma,M) = \sup\set{\frac{\rmD^+(g,x)}{\rmD^-(g,y)} : x,y \geq M, \ |x-y|\leq \gamma}, 
\end{equation}
 Note that $\RV(g,\gamma,M) \geq 1$ for all $M$ and $\gamma$ and for fixed $\gamma$ is nonincreasing in $M$ and define
\begin{equation}
    \RV(g,\gamma):= \lim_{M \to\infty} \RV(g,\gamma, M).
\end{equation}
We say that $g$ has \emph{bounded relative variation} (in short: $g \in\BRV$) if $\RV(g,\gamma)<\infty$ for some $\gamma > 0$ and \emph{trivial relative variation} ($g \in\TRV)$ if $\RV(g,\gamma)=1$ for some $\gamma$. We will show later that neither of these notions depends on the particular choice of $\gamma$ (cf.\ Prop.\ \ref{prop:BRVnotgammadep}) and hence one has
\begin{align*}
    \TRV &= \set{g : g \text{ is admissible and } \RV(g,1) =1} \\
    \BRV &= \set{g : g \text{ is admissible and } \RV(g,1) < \infty}.
\end{align*}
\end{definition}
\begin{remark}
Let us make some comments about the definitions.
\begin{enumerate}[{\rm(a)}]
    \item We think it is extremely probable that these spaces of functions have been considered in other works, likely under a different name. However, we were unable to locate a precise reference. 
    \smallskip
    
    \item The assumptions imply that $\BRV$ functions are differentiable almost everywhere and locally Lipschitz continuous for sufficiently large $x$. If $g$ is everywhere differentiable and $g \in \BRV$, then one can check that $\log g'$ is a function of bounded variation on any compact subinterval of $[M,\infty)$ where $M$ is sufficiently large.
    \smallskip
    
    \item However, we find it prudent not to restrict to differentiable functions, first, since the results do not need that assumption, and second, piecewise affine functions supply a useful set of test cases and examples to consider.

\end{enumerate}
\end{remark}

Next, we describe the kinds of closed sets to which we may apply our results. In order to do this, let us formulate precisely what it means to say that a set is \emph{thick}. The ideas and key results date back to work of Newhouse \cite{Newhouse1970Proc, Newhouse1979PMIHES}. Let $K \subseteq \mathbb{R}$ be a compact set and denote by $I = [\min K, \max K]$ its convex hull. Any connected component of $I\backslash K$ is called a \emph{gap} of $K$. A \emph{presentation} of $K$ is given by an ordering $\mathcal{U} = \{U_n\}_{n \ge 1}$ of the gaps of $K$. If $u \in K$ is a boundary point of a gap $U$ of $K$, we denote by $B$ the connected component of $I\backslash (U_1\cup U_2\cup\ldots \cup U_n)$ (with $n$ chosen so that $U_n = U$) that contains $u$ and write
$$
\tau(K, \mathcal{U}, u)=\frac{|B|}{|U|}.
$$
The \emph{thickness} $\tau(K)$ of $K$ is given by
$$
\tau(K) = \sup_{\mathcal{U}} \inf_{u} \tau(K, \mathcal{U}, u).
$$
One can check that $\tau(K) = \infty$ if and only if $K$ is a closed interval. It is well-known that one can take as a maximizer any presentation $\mathcal{U}$ in which the gaps are ordered in such a way that the gap lengths are nonincreasing \cite{PalisTakens1993}.

The following consequence of the Newhouse gap lemma \cite{Newhouse1970Proc, Newhouse1979PMIHES} is stated as \cite[Lemma~6.2]{DamanikGorodetski2011CMP} and proved there.

\begin{lemma}\label{l.sumofcs}
Suppose $K, K'\subseteq \mathbb{R}$ are compact sets with $\tau(K)\cdot \tau(K')>1$. Assume also that the size of the largest gap of $K'$ is not greater than the diameter of $K$, and the size of the largest gap
of $K$ is not greater than the diameter of $K'$. Then,
$$
K + K' = [\min K + \min K', \max K + \max K'].
$$
\end{lemma}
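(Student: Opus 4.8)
The plan is to deduce the statement from the Newhouse gap lemma, applied separately for each point of the target interval. The inclusion $K+K' \subseteq [\min K + \min K', \max K + \max K']$ is automatic, so the real task is to show that every $\lambda$ in the latter interval belongs to $K+K'$. The key reformulation is that, writing $\lambda - K' := \set{\lambda - t : t \in K'}$, one has $\lambda \in K+K'$ if and only if $K \cap (\lambda - K') \neq \emptyset$. After fixing such a $\lambda$ I would set $\tilK := \lambda - K'$ and aim to prove $K \cap \tilK \neq \emptyset$.

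The first step is to record how the relevant quantities transform under the reflection-translation $t \mapsto \lambda - t$. Since this affine map preserves all lengths and carries gaps to gaps and gap-boundary points to gap-boundary points, it takes presentations of $K'$ to presentations of $\tilK$ and preserves the ratios $|B|/|U|$ appearing in the definition of thickness; hence $\tau(\tilK) = \tau(K')$, the collection of gap lengths of $\tilK$ coincides with that of $K'$, and $\diam(\tilK) = \diam(K')$. Consequently $\tau(K)\cdot\tau(\tilK) = \tau(K)\cdot\tau(K') > 1$, the largest gap of $\tilK$ has length at most $\diam(K)$, and the largest gap of $K$ has length at most $\diam(\tilK)$; these are precisely the hypotheses of the gap lemma apart from the linking condition.

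Next I would verify the linking condition. For $\lambda$ equal to either endpoint of the parameter interval the claim is immediate: if $\lambda = \min K + \min K'$ then $\max \tilK = \min K$, so $\min K \in K \cap \tilK$, and the opposite endpoint is symmetric. For $\lambda$ strictly between the endpoints, the convex hull $[\lambda - \max K', \lambda - \min K']$ of $\tilK$ has right endpoint strictly greater than $\min K$ and left endpoint strictly less than $\max K$, from which a short computation shows that it overlaps the convex hull $[\min K, \max K]$ of $K$ in an interval of positive length. Moreover neither set can lie in a gap of the other: if, say, $K$ were contained in a gap $U$ of $\tilK$, then $[\min K, \max K] \subseteq U$ would force $\diam(K) \le |U| \le \diam(K)$ (the second inequality because $|U|$ is at most the largest gap of $\tilK$), hence $|U| = \diam(K)$, which is impossible since $U$ is open while $[\min K, \max K]$ is a closed interval of the same length; the symmetric containment is ruled out identically using $\diam(\tilK) = \diam(K')$ and the bound on the largest gap of $K$. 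With the product of thicknesses exceeding $1$ and the linking condition in hand, the Newhouse gap lemma yields $K \cap \tilK \neq \emptyset$, that is, $\lambda \in K+K'$.

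I expect the only genuinely delicate point to be the bookkeeping at the two ends of the parameter interval together with the extremal case in which a largest gap has length exactly equal to the diameter of the other set; this is exactly where the closed-versus-open distinction above does the work, and it is why the gap-size hypotheses can be stated with non-strict inequalities. Everything else reduces to affine invariance of thickness plus a direct invocation of the gap lemma.
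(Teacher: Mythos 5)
Your argument is correct and coincides with the standard proof: the paper does not reprove this lemma but quotes it from Damanik--Gorodetski \cite{DamanikGorodetski2011CMP}, where the proof is exactly your reduction to showing $K \cap (\lambda - K') \neq \emptyset$ for each $\lambda$ in the target interval, with the gap-size hypotheses used precisely as you use them to rule out either set lying in a gap of the other before invoking the Newhouse Gap Lemma. No gaps.
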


\begin{remark}\label{r.cplusc}
A particular consequence of Lemma~\ref{l.sumofcs} is the following: if $K \subseteq \mathbb{R}$ is a Cantor set with $\tau(K) > 1$, then
$$
K + K = [2 \min K, 2 \max K].
$$
\end{remark}

Thickness is a ubiquitous notion in geometric analysis. For a partial list, it has applications in  geometric measure theory \cite{McDTaylor2021, SimonTaylor2020MPCPS}, number theory \cite{Yu2020}, fractal geometry \cite{Yavicoli2021IJM}, and pinned distance problems \cite{McDTaylor2021}. Multidimensional generalizations have been considered in \cite{SimonTaylor2020MPCPS, Yavicoli2022preprint}. For additional information about thickness, we direct the reader to \cite{Astels2000TAMS, PalisTakens1993}.

As mentioned before, we will consider only semibounded closed sets, so after shifting, we will also always assume that said sets lie in $\bbR_+ = [0,\infty)$.

\begin{definition}
 An \emph{ordered fragmentation} of a semibounded closed set $F$ is a decomposition
\begin{equation}
    F= \bigcup_{n=0}^\infty K_n,
\end{equation}
where each $K_n$ is compact and nonempty, and $\max K_n < \min K_{n+1}$ for all $n$. We call $K_n$ the $n$th \emph{fragment} of $F$. 

Given constants $A,a,\tau>0$, we say that $F$ is $(A,a,\tau)$-\emph{thick} if $F$ has an ordered fragmentation such that
\begin{align}\label{eq:AatThick1}
    A \leq \diam(K_n) \leq 2A \quad \forall n \geq 0, \\
    \label{eq:AatThick2}
    \dist(K_n,K_{n+1}) < a \quad  \forall n \geq 0, \\
    \label{eq:AatThick3}
    \tau(K_n) \geq \tau \quad \forall n \geq 0.
\end{align}
\end{definition}

\begin{theorem}\label{t:AbstracBSC}
Assume $F\subseteq \bbR_+$ is an $(A,a,\tau)$-thick semibounded closed set for constants $A>a>0$ and $\tau >1$.
Suppose $g \in \TRV$, and let $\tilF = g[F]$. Then $\widetilde F+\widetilde F$ contains a half-line.
\end{theorem}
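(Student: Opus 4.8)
The plan is to transfer the sum through the fragmentation. Write $\widetilde K_n = g[K_n]$, so that $\bigcup_{n\ge 0}\widetilde K_n$ is an ordered fragmentation of $\widetilde F$ (here $g$ is a homeomorphism of each $[\min K_n,\max K_n]$ onto its image) and $\widetilde F+\widetilde F \supseteq \bigcup_{m,n}(\widetilde K_m+\widetilde K_n)$. I will produce an index $n_0$ such that for every $n\ge n_0$ both $\widetilde K_n+\widetilde K_n$ and $\widetilde K_n+\widetilde K_{n+1}$ are closed intervals (via Lemma~\ref{l.sumofcs}), such that consecutive intervals in the chain $\widetilde K_{n_0}+\widetilde K_{n_0},\ \widetilde K_{n_0}+\widetilde K_{n_0+1},\ \widetilde K_{n_0+1}+\widetilde K_{n_0+1},\ \ldots$ overlap, and such that the right endpoints of this chain diverge to $+\infty$ (since $\max K_n\to\infty$, which is forced by $\diam K_n\ge A$, and $g\to\infty$). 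The union of the chain is then the half-line $[2g(\min K_{n_0}),\infty)\subseteq \widetilde F+\widetilde F$, which is what we want.

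The crux is controlling thickness and gap sizes of the $\widetilde K_n$, and this is exactly where $g\in\TRV$ enters. Fix $\gamma > 4A+a$; by Proposition~\ref{prop:BRVnotgammadep} we still have $\RV(g,\gamma)=1$, so for any $\varepsilon>0$ there is $M$ with $\RV(g,\gamma,M)\le 1+\varepsilon$. On a window $W=[p,p+\gamma]$ with $p\ge M$, this pins all Dini derivatives $\rmD^\pm(g,\cdot)$ on $W$ into a common interval $[c,(1+\varepsilon)c]$ with $c>0$; feeding this into the mean value inequality for Dini derivatives of monotone functions gives $c\,(v-u)\le g(v)-g(u)\le (1+\varepsilon)c\,(v-u)$ for all $u\le v$ in $W$. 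In other words, far out, $g$ distorts lengths and length-ratios of nearby intervals by at most a factor $1+\varepsilon$. Choosing $n_0$ so that $\min K_{n_0}\ge M$, each consecutive pair of fragments $K_n\cup K_{n+1}$ with $n\ge n_0$ lies in one such window, with a single constant $c_n$.

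Now I would fix $\varepsilon$ small enough that $\varepsilon<\tfrac{\tau-1}{2}$ and $(1+\varepsilon)\tfrac{a}{A}<1$, which is possible because $\tau>1$ and $A>a$. Unwinding the definition of thickness: given a presentation of $K_n$ and a boundary point $u$ of a gap $U$ with adjacent block $B$, the homeomorphism $g$ carries this to a presentation of $\widetilde K_n$ for which $|g[B]|/|g[U]|\ge (1+\varepsilon)^{-1}|B|/|U|$, so $\tau(\widetilde K_n)\ge \tau(K_n)/(1+\varepsilon)\ge \tau/(1+\varepsilon)>1$; in particular $\tau(\widetilde K_m)\cdot\tau(\widetilde K_n)>1$ for all $m,n\ge n_0$. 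Since $\tau(K_n)\ge\tau$ forces every gap of $K_n$ to have length at most $\diam(K_n)/(\tau+1)$, the distortion bound gives, for $|m-n|\le 1$, that every gap of $\widetilde K_n$ has length at most $\tfrac{2(1+\varepsilon)}{\tau+1}\diam(\widetilde K_m)\le\diam(\widetilde K_m)$. And from $\dist(K_n,K_{n+1})<a\le A\le\diam(K_n)$, the distortion bound gives $\min\widetilde K_{n+1}-\max\widetilde K_n < (1+\varepsilon)\tfrac{a}{A}\,\diam(\widetilde K_n)<\diam(\widetilde K_n)$, and symmetrically $\min\widetilde K_{n+1}-\max\widetilde K_n<\diam(\widetilde K_{n+1})$. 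The first two estimates verify the hypotheses of Lemma~\ref{l.sumofcs} for the pairs $(\widetilde K_n,\widetilde K_n)$ and $(\widetilde K_n,\widetilde K_{n+1})$, so those Minkowski sums are intervals with the expected endpoints; the third estimate is exactly the statement that consecutive intervals in the chain overlap. Chaining them, as in the first paragraph, completes the proof.

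The hardest part is the quasi-preservation of thickness — checking that passing gaps and blocks through $g$ costs at most a factor $1+\varepsilon$ uniformly over all presentations — and it genuinely needs $\TRV$ rather than merely $\BRV$: with $\BRV$ one only gets a fixed distortion constant $C>1$, which need not be small enough to keep $\tau/C>1$ or $2C/(\tau+1)\le 1$. The companion subtlety is purely arithmetic bookkeeping: one must select a single $\varepsilon$ that simultaneously makes the thickness product exceed $1$, keeps the largest gaps no larger than the neighboring diameters, and keeps the inter-fragment distances below the diameters, and this is precisely where the strict inequalities $\tau>1$ and $A>a$ are consumed. Everything else (that $g$ is a homeomorphism onto the convex hull of $\widetilde K_n$, that discarding $K_0,\dots,K_{n_0-1}$ is harmless since only a half-line is sought, and that a chain of consecutively overlapping closed intervals with diverging right endpoints has a half-line as its union) is routine.
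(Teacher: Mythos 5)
Your proposal is correct and follows essentially the same route as the paper: the paper proves the more general Theorem~\ref{t:AbstracBSCgen} (tracking explicit constants such as $\sqrt[5]{3/2}$ and $\sqrt[3]{A/a}$ in place of your ``choose $\varepsilon$ small'') and deduces Theorem~\ref{t:AbstracBSC} from it, using the same fragmentation $\widetilde K_n = g[K_n]$, the same thickness-distortion estimate (Lemma~\ref{lem:thicknessUnderTransformation}), the same gap bound (Lemma~\ref{lem:thicknessLongestgap}), and the same chain of overlapping intervals $\widetilde K_n+\widetilde K_n$ and $\widetilde K_n+\widetilde K_{n+1}$ fed into Lemma~\ref{l.sumofcs}.
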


One can weaken the assumption $g \in \TRV$ ($\RV(g,\gamma)=1$) somewhat, depending upon the constants $A,a > 0$. Namely, one can show that $\tilF+ \tilF$ contains a half-line if $F$ is $(A,a,\tau)$ thick for some $\tau>1$ and $\RV(g)$ is sufficiently small.

\begin{theorem} \label{t:AbstracBSCgen}
Given $A>a>0$ and $\varepsilon >0$, there exists $\delta = \delta(A,a,\varepsilon)>0$ such that if $F$ is $(A,a,1+\varepsilon)$-thick and  $g\in \BRV$ has $1 \leq \RV(g,A) < 1+\delta$, then $\widetilde F+\widetilde F$ contains a half-line, where $\tilF=g[F]$.
\end{theorem}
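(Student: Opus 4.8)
The plan is to run the same machinery used to prove Theorem \ref{t:AbstracBSC}, but now tracking quantitatively how a small relative variation of $g$ degrades the thickness of the images of the fragments, and then choosing $\delta$ small enough that the degraded thickness still exceeds $1$. Fix an ordered fragmentation $F = \bigcup_{n\ge0} K_n$ witnessing $(A,a,1+\varepsilon)$-thickness, and let $\tilde K_n = g[K_n]$. The first step is a local Lipschitz-distortion estimate: for $n$ large, $g$ restricted to the convex hull of $K_n$ is bi-Lipschitz with constants whose ratio is controlled by $\RV(g,A) < 1+\delta$ (this is essentially the content of the definition of $\RV(g,\gamma,M)$, since $\diam K_n \le 2A$; one may need $\RV(g,2A)$ or to invoke Prop.\ \ref{prop:BRVnotgammadep} to pass between the $\gamma=A$ and $\gamma=2A$ scales, which only costs squaring $\RV$). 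Applying such a bi-Lipschitz map to a compact set of thickness $\tau$ yields a set of thickness at least $\tau/\Lambda^2$ where $\Lambda$ is the relevant distortion — this is a standard fact about how thickness transforms under bi-Lipschitz maps, following directly from the definition of $\tau$ via presentations, since each ratio $|B|/|U|$ is distorted by at most $\Lambda^2$. Hence $\tau(\tilde K_n) \ge (1+\varepsilon)/\Lambda^2$ for all large $n$, and choosing $\delta$ so that $(1+\delta)^2 < 1+\varepsilon$ (so $\Lambda^2 < 1+\varepsilon$) gives $\tau(\tilde K_n) > 1$ for all large $n$, say $n \ge N_0$.

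Next I would verify the remaining hypotheses needed to apply Lemma \ref{l.sumofcs} (in the self-sum form of Remark \ref{r.cplusc}, or its two-set version) to consecutive pairs. For the diameter: $\diam \tilde K_n \in [\Lambda^{-1}A_n, \Lambda A_n]$ where $A_n \in [A,2A]$ is the diameter of $K_n$ — the point is that all $\diam \tilde K_n$ are comparable up to a factor depending only on $\Lambda$ and hence on $A,a,\varepsilon$, and, crucially, that $\diam \tilde K_n / \diam \tilde K_{n+1}$ is bounded above and below by constants close to $1$. For the gap condition: the gap between $\tilde K_n$ and $\tilde K_{n+1}$ has length at most (the local Lipschitz constant of $g$ near $K_n\cup K_{n+1}$) times $a$, while $\diam \tilde K_n \ge$ (the local lower-Lipschitz constant) times $A$; since $A > a$ and the two Lipschitz-type constants differ by at most the factor $\Lambda$, again shrinking $\delta$ if necessary forces the largest gap between $\tilde K_n$ and $\tilde K_{n+1}$ to be no larger than $\min(\diam \tilde K_n, \diam \tilde K_{n+1})$. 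With $\tau(\tilde K_n)\cdot\tau(\tilde K_{n+1}) > 1$ and these size comparisons in hand, Lemma \ref{l.sumofcs} yields $\tilde K_n + \tilde K_{n+1} = [\min \tilde K_n + \min \tilde K_{n+1}, \max \tilde K_n + \max \tilde K_{n+1}]$ for all $n \ge N_0$.

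Finally I would chain these intervals together to produce the half-line, exactly as in the proof of Theorem \ref{t:AbstracBSC}: one shows that the right endpoint of the interval coming from the pair $(\tilde K_n, \tilde K_{n+1})$ is at least the left endpoint of the interval coming from $(\tilde K_{n+1},\tilde K_{n+2})$, so that the union of all these intervals for $n\ge N_0$ is a single half-line $[c,\infty)$. The overlap inequality reduces to $\max \tilde K_n \ge \min \tilde K_{n+2}$-type comparisons, i.e.\ to the statement that the gap between $\tilde K_n$ and $\tilde K_{n+2}$ is dominated by $\diam \tilde K_{n+1}$; since $\dist(K_n,K_{n+2}) < 2a + \diam K_{n+1} \le 2a + 2A$ and this gets magnified by at most $\Lambda$ under $g$ while $\diam \tilde K_{n+1}$ shrinks by at most $\Lambda$, the inequality $A > a$ (with room to spare after $\delta$ is small) again closes the argument; I expect the cleanest bookkeeping is to absorb all of these finitely many numerical constraints on $\delta$ at once at the end.

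The main obstacle, and the only genuinely new point relative to Theorem \ref{t:AbstracBSC}, is the quantitative propagation of constants: one must make all the comparisons above —- thickness degradation, diameter comparability, gap-to-diameter ratios —- \emph{uniform in $n$} for large $n$, which is exactly why the hypothesis is phrased in terms of $\RV(g,A) = \lim_{M\to\infty}\RV(g,A,M)$ rather than a pointwise derivative bound, and one must check that a single choice of $\delta=\delta(A,a,\varepsilon)$ simultaneously satisfies $(1+\delta)^2 < 1+\varepsilon$ and the (finitely many, all of the form ``$\Lambda^k a < \Lambda^{-\ell} A$'') gap-domination inequalities. None of these is deep, but assembling them carefully is the substance of the proof.
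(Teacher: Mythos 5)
Your overall strategy is the paper's: quantify how $\RV(g,A)<1+\delta$ degrades the thickness of the images $\widetilde K_n$, apply the Newhouse gap lemma to consecutive fragments, and chain the resulting intervals. The thickness-degradation step is fine (the paper's Lemma~\ref{lem:thicknessUnderTransformation} in fact gives the better factor $\Lambda^{-1}$ rather than $\Lambda^{-2}$, but either suffices). However, there are two genuine problems in the second half.

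First, the hypotheses of Lemma~\ref{l.sumofcs} concern the largest \emph{internal} gap of each compact set (a component of its convex hull minus the set), not the distance between $\widetilde K_n$ and $\widetilde K_{n+1}$, which is the quantity you bound by ``$\Lambda\cdot a$''. Controlling the inter-fragment distance does nothing toward verifying the lemma's hypotheses. The correct route (the paper's Lemma~\ref{lem:thicknessLongestgap}) is that $\tau(\widetilde K_{n+1})>1$ forces its largest internal gap to be at most $\tfrac13\diam(\widetilde K_{n+1})$, which combined with your diameter-comparability estimate gives what the lemma needs. Second, and more seriously, the chaining step as written cannot work: writing $I_n=[x_n,y_n]$ for the hull of $K_n$, the interval from the pair $(\widetilde K_n,\widetilde K_{n+1})$ is $J_n'=[g(x_n)+g(x_{n+1}),\,g(y_n)+g(y_{n+1})]$, and
\[
\min J_{n+1}'-\max J_n' \;=\; \bigl(g(x_{n+2})-g(y_{n+1})\bigr)+\bigl(g(x_{n+1})-g(y_n)\bigr)\;>\;0
\]
always, since $g$ is increasing; consecutive $J_n'$ \emph{never} overlap, no matter how small $\delta$ is. Equivalently, the condition you reduce to --- that the distance from $\widetilde K_n$ to $\widetilde K_{n+2}$ be dominated by $\diam\widetilde K_{n+1}$ --- is unsatisfiable, because that distance already contains the convex hull of $\widetilde K_{n+1}$ plus two nonempty gaps. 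The missing idea is to interleave the self-sum intervals $J_n=\widetilde K_n+\widetilde K_n=[2g(x_n),2g(y_n)]$ and chain $J_n\to J_n'\to J_{n+1}\to\cdots$; then the overlap conditions become $2g(y_n)\ge g(x_n)+g(x_{n+1})$ and $g(y_n)+g(y_{n+1})\ge 2g(x_{n+1})$, i.e.\ the image of a fragment's hull (length $\ge A$ upstairs) dominates the image of a single inter-fragment gap (length $\le a$ upstairs), which is exactly where $A>a$ and Lemma~\ref{lem:gratioToxRatioViaRV} with $\RV$ close to $1$ enter. This is how the paper closes the argument.
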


\begin{remark} Let us make a few remarks.
\begin{enumerate}[{\rm(a)}]
\item The proof of Theorem~\ref{t:AbstracBSCgen} gives an explicit bound on $\delta$, but we have not optimized the constants in the proof.
\smallskip

\item One can clearly relax some of the hypotheses. For instance, in Theorem~\ref{t:AbstracBSCgen}, it suffices that $F$ be \textit{eventually} $(A,a,1+\varepsilon)$-thick in the sense that $F \cap [c,\infty)$ is $(A,a,1+\varepsilon)$-thick for some $c>0$. One can similarly relax hypotheses on $g$.
\end{enumerate}
\end{remark}
Let us remark that the assumptions of Theorem~\ref{t:AbstracBSC} are met with $f$ any  polynomial function nonnegative on $\bbR_+$, as well as subexponential functions such as $\exp(x^\alpha)$ $0<\alpha<1$. The assumptions of Theorem~\ref{t:AbstracBSCgen} are met by those functions and exponential functions of the type $\exp(rx)$ with $r>0$ sufficiently small. We give an account of these objects in Section~\ref{sec:BRV} including a discussion of the structure of the spaces $\TRV$ and $\BRV$. One can use this to prove spectral results for separable operators modelled on the Fibonacci tiling. To define such potentials, fix $\lambda > 0$, define
\begin{equation} V_\lambda(x) = \lambda \sum_{n\in \bbZ} \chi_{[1-\alpha,1)}(n\alpha \ \mathrm{mod} \ 1) \chi_{[n,n+1)}(x)  \end{equation}
and put $\Sigma_\lambda = \sigma(-\Delta+V_\lambda)$.

\begin{corollary} \label{coro:H^s}
For any $\lambda>0$, $s>0$, $\Sigma_\lambda^s + \Sigma_\lambda^s$ contains a half-line.
\end{corollary}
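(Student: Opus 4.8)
The plan is to deduce Corollary~\ref{coro:H^s} from Theorem~\ref{t:AbstracBSC} by verifying two things: first, that $\Sigma_\lambda$ is an $(A,a,\tau)$-thick semibounded closed set for suitable constants with $A > a > 0$ and $\tau > 1$; and second, that the function $g(x) = x^s$ (suitably modified near $0$ so as to map $\bbR_+$ into $\bbR_+$, or restricted to $[\inf \Sigma_\lambda, \infty)$ after a harmless shift) lies in $\TRV$. Granting both, Theorem~\ref{t:AbstracBSC} applies with $F = \Sigma_\lambda$ and $\tilF = g[F] = \Sigma_\lambda^s$, yielding that $\Sigma_\lambda^s + \Sigma_\lambda^s$ contains a half-line.

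For the thickness input, I would invoke the known structure theory of the Fibonacci Hamiltonian's spectrum. The set $\Sigma_\lambda$ is a compact (hence bounded, in particular semibounded) zero-measure Cantor set, and quantitative work on the Fibonacci trace map (e.g.\ the results of Damanik--Gorodetski and collaborators cited in the introduction) shows that the local thickness of $\Sigma_\lambda$ is bounded below by a positive constant, and in fact the "pieces" of $\Sigma_\lambda$ at a suitable level of the covering by periodic-approximant bands have uniformly comparable diameters, controlled gaps between consecutive pieces, and thickness bounded below. Concretely: take the natural hierarchical decomposition of $\Sigma_\lambda$ coming from the periodic approximations; at a fixed generation this produces finitely many bands, each a Cantor set of thickness $\geq \tau_0$ for some $\tau_0 > 0$ depending only on $\lambda$; grouping consecutive bands into fragments $K_n$ of roughly equal diameter gives an ordered fragmentation. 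The parameter $\tau$ can be made $>1$ by passing to a sufficiently fine generation (using that thickness of the full spectrum at small scales, after rescaling, is bounded below by a quantity one can force above $1$ — or, more carefully, by observing that what Theorem~\ref{t:AbstracBSC} really needs is only $\tau > 1$, which the Newhouse-type lower bounds for the Fibonacci spectrum provide). One then fixes $A$ as the common diameter scale and checks $A > a$, the separation bound, which follows since gaps between consecutive bands at a fixed generation are uniformly smaller than the band diameters.

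For the function input, I would check directly that $g(x) = x^s$ has trivial relative variation. Since $g$ is differentiable on $(0,\infty)$ with $g'(x) = s x^{s-1}$, for $x, y \geq M$ with $|x - y| \leq \gamma$ one has
\[
\frac{\rmD^+(g,x)}{\rmD^-(g,y)} = \left(\frac{x}{y}\right)^{s-1} \leq \left(\frac{M+\gamma}{M}\right)^{|s-1|} = \left(1 + \frac{\gamma}{M}\right)^{|s-1|},
\]
which tends to $1$ as $M \to \infty$. Hence $\RV(g,\gamma) = 1$ for every $\gamma > 0$, so $g \in \TRV$. (The small modification near $x = 0$ needed to make $g$ map $\bbR_+$ to $\bbR_+$ when $s < 1$, or the shift of $\Sigma_\lambda$ into $\bbR_+$, does not affect membership in $\TRV$, since that notion is governed by behavior as $x \to \infty$; alternatively one simply notes $\Sigma_\lambda \subseteq [0,\infty)$ already, so no shift is needed, and $x^s$ is admissible on $[\inf\Sigma_\lambda,\infty)$ once one checks $\inf \Sigma_\lambda \geq 0$ with strict positivity handled by an arbitrarily small shift.)

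The main obstacle is the thickness claim: extracting from the literature a clean statement that $\Sigma_\lambda$ admits an ordered fragmentation with uniformly comparable diameters, bounded inter-fragment gaps, \emph{and} fragment thickness strictly above $1$. The first two properties are structurally robust features of hierarchical Cantor sets, but forcing the thickness constant to exceed $1$ requires either a quantitative lower bound on the thickness of the Fibonacci spectrum at small scales (which is available for all coupling constants via the trace-map dynamics, and in fact $\tau(\Sigma_\lambda) \to \infty$ as $\lambda \to 0$ while remaining positive for all $\lambda$), or — more carefully — a rescaling argument showing that after zooming into any single band the local thickness, measured at the appropriate scale, is bounded below by a constant one can take $> 1$. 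I would handle this by citing the relevant quantitative results on the Fibonacci spectrum and, if the bare thickness of $\Sigma_\lambda$ is not a priori $> 1$ for large $\lambda$, noting that one only needs \emph{eventual} thickness in the sense of the remark following Theorem~\ref{t:AbstracBSCgen} combined with the self-similar structure, which propagates a favorable thickness constant from small scales throughout; the remaining bookkeeping (choosing the generation, defining the $K_n$, verifying $A > a$) is routine.
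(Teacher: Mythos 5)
There is a genuine gap, and it begins with a misreading of what $\Sigma_\lambda$ is. Here $\Sigma_\lambda = \sigma(-\Delta+V_\lambda)$ is the spectrum of a \emph{continuum} Schr\"odinger operator on $L^2(\bbR)$ with a bounded potential, so it is a closed, semibounded, \emph{unbounded} subset of $\bbR$. Your proposal asserts that $\Sigma_\lambda$ is a compact zero-measure Cantor set; if that were true the corollary would be false, since $\Sigma_\lambda^s+\Sigma_\lambda^s$ would then be bounded and could not contain a half-line. The structure theory you invoke (trace-map dynamics, thickness estimates for the Fibonacci spectrum) concerns the \emph{discrete} Fibonacci Hamiltonian, which is not the operator in play; and even in that setting the thickness tends to $0$ as $\lambda\to\infty$, so the uniform-in-$\lambda$ bound $\tau>1$ that your argument needs cannot be extracted by the route you sketch. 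You correctly flag the thickness input as the main obstacle, but the proposed fixes (rescaling, local thickness, self-similarity) do not resolve it.

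The paper's proof sidesteps all of this by choosing a different factorization of the map $E\mapsto E^s$. The result actually available from \cite{DFG2021JFA} is that $\Sigma_\lambda^{1/2}$ --- the image of the spectrum under the square root, i.e., the momentum variable --- is \emph{eventually} $(A,a,\tau)$-thick for suitable constants: in the square-root scale the band lengths stay of order one while the gaps close at high energy, which is exactly the Bethe--Sommerfeld mechanism for continuum operators. One then writes $\Sigma_\lambda^s = g[\Sigma_\lambda^{1/2}]$ with $g(x)=x^{2s}$, notes $g\in\TRV$ (your computation for power functions is correct and is the same as the paper's), and applies Theorem~\ref{t:AbstracBSC} together with the ``eventually thick'' relaxation noted in the remark. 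So you should take $F=\Sigma_\lambda^{1/2}$ and $g(x)=x^{2s}$ rather than $F=\Sigma_\lambda$ and $g(x)=x^s$; with that choice the thickness hypothesis is a citation rather than the unresolved obstacle around which your argument is built.
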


\begin{proof}
It was shown in \cite{DFG2021JFA} that $\Sigma_\lambda^{1/2}$ is eventually $(A,a,\tau)$-thick for suitable $A,a,\tau$. Since $x\mapsto x^{2s}$ is in $\TRV$, the result follows immediately from Theorem~\ref{t:AbstracBSC}.
\end{proof}

Of course, the sum in Corollary~\ref{coro:H^s} corresponds to the operator $$H_x^s+H_y^2 = (-\partial_x^2+V_\lambda(x))^s + (-\partial_y^2+V_2(y))^s.$$ 
It would also be of interest to study the operator $(-\Delta)^s+V_\lambda(x)+V_\lambda(y)$, but this is not within reach of the methods of this paper.
\medskip

One can turn the formulation of Theorem~\ref{t:AbstracBSCgen} around and see that the result also holds in an appropriate  dual asymptotic regime of bounded $\Lambda(g,A)$ and sufficiently large thickness. More precisely, Theorem~\ref{t:AbstracBSCgen} can be viewed as fixing an $(A,a,\tau)$-thick set $F$ and proving the desired half-line statement for $g \in \BRV$ with $\Lambda$ sufficiently small. One can also fix a $\BRV$ function and prove a similar statement if $F$ is ``sufficiently thick''.

\begin{theorem} \label{t:BSC:bigtau}
Given $A>0$ and $R > 1$, there exist constants $a_0 = a_0(A,R)$ and $\tau_0 =\tau_0(R)>0$ such that the following holds. If $F \subseteq \bbR_+$ is $(A,a,\tau)$-thick for some $0<a \leq a_0$ and $\tau \geq \tau_0$ and $\Lambda(g,A) \leq R$, then $\tilF + \tilF$ contains a half-line.
\end{theorem}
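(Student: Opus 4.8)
The plan is to transport the thickness, diameter, and gap data of the fragmentation $F=\bigcup_n K_n$ through $g$. The first step is to unpack $\RV(g,A)\le R<\infty$: it forces $g\in\BRV$, hence $g$ is locally Lipschitz — so absolutely continuous — on $[M,\infty)$ for $M$ large, giving $g(y)-g(x)=\int_x^y g'(t)\,dt$ whenever $M\le x\le y$; a short chaining argument (as for Prop.~\ref{prop:BRVnotgammadep}) shows $\RV(g,cA)\le\RV(g,A)^c\le R^c$ for every integer $c\ge1$; and $\RV(g,A)<\infty$ forces $\rmD^-(g,y)>0$ for all large $y$, since otherwise some ratio in the definition of $\RV$ is infinite. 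Consequently the multiplicative oscillation of $g'$ over any interval of length $\le 5A$ contained in $[M,\infty)$ is at most $\RV(g,5A,M)$, which decreases to a limit $\le R^5$ as $M\to\infty$. I would then fix $\rho>R^5$ (e.g.\ $\rho=2R^5$) and choose $N_0$ so that for all $n\ge N_0$ the interval $[\min K_n,\max K_{n+1}]$, which has length $<5A$, lies in a region where this oscillation bound $\rho$ holds.

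The second step transfers everything to $\tilK_n:=g[K_n]$; write $p_n=g(\min K_n)$ and $q_n=g(\max K_n)$. Since $g$ restricts to an increasing homeomorphism $\mathrm{conv}(K_n)\to\mathrm{conv}(\tilK_n)$, it carries gaps to gaps and bridges to bridges, so comparing $|g[B]|/|g[U]|$ with $|B|/|U|$ via the integral formula and the oscillation bound yields $\tau(\tilK_n)\ge\tau(K_n)/\rho\ge\tau/\rho$ for $n\ge N_0$. The same estimates give, for $n\ge N_0$: every gap of $\tilK_n$ has length $\le\tfrac{2\rho}{\tau+1}\diam(\tilK_n)$ (because every gap of $K_n$ has length $\le\tfrac{2A}{\tau+1}\le\tfrac{2}{\tau+1}\diam(K_n)$, using the thickness bound and $A\le\diam(K_n)$); the ratio $\diam(\tilK_n)/\diam(\tilK_{n+1})$ lies in $[(2\rho)^{-1},2\rho]$; and, since $\min K_{n+1}-\max K_n<a$ while $\diam(K_n),\diam(K_{n+1})\ge A$, one has $p_{n+1}-q_n<a\rho\,m_n$ while $q_n-p_n\ge A\,m_n$ and $q_{n+1}-p_{n+1}\ge A\rho^{-1}m_n$, where $m_n=\mathrm{essinf}\{g'(t):t\in[\min K_n,\min K_{n+1}]\}>0$.

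The third step fixes the constants and chains intervals. Choose $\tau_0=\tau_0(R)$ large enough that $\tau\ge\tau_0$ forces $\tau/\rho>1$ and $\tfrac{4\rho^2}{\tau+1}\le1$; then $\tau(\tilK_n)>1$, and every gap of $\tilK_n$ has length $\le\tfrac{4\rho^2}{\tau+1}\diam(\tilK_{n\pm1})\le\diam(\tilK_{n\pm1})$, so Remark~\ref{r.cplusc} applies to $\tilK_n$ and Lemma~\ref{l.sumofcs} to each pair $(\tilK_n,\tilK_{n+1})$. Choose $a_0=a_0(A,R)$ small enough that $a\le a_0$ forces $a\rho^2\le A$; then $p_{n+1}-q_n<\min(q_n-p_n,\,q_{n+1}-p_{n+1})$. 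For $n\ge N_0$ we obtain
\[
\tilK_n+\tilK_n=[2p_n,2q_n],\quad \tilK_n+\tilK_{n+1}=[p_n+p_{n+1},\,q_n+q_{n+1}],\quad \tilK_{n+1}+\tilK_{n+1}=[2p_{n+1},2q_{n+1}],
\]
and the preceding inequalities say precisely that $p_n+p_{n+1}\le2q_n$ and $2p_{n+1}\le q_n+q_{n+1}$; hence these three intervals overlap consecutively and their union is $[2p_n,2q_{n+1}]$. Since $2p_{n+1}<2q_{n+1}$ these intervals overlap as $n$ increases, and $q_n=g(\max K_n)\to\infty$ because $\max K_n\to\infty$ and $g$ is admissible; therefore $\tilF+\tilF\supseteq\bigcup_{n\ge N_0}[2p_n,2q_{n+1}]=[2p_{N_0},\infty)$.

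The hard part is the bookkeeping ensuring $\tau_0$ depends only on $R$ and $a_0$ only on $A$ and $R$: this works because every quantity that must be controlled is a ratio in which the length scale $A$ cancels — the diameter-to-gap ratios, the consecutive-diameter ratio, and the estimate $\RV(g,cA)\le R^c$, which carries no $A$. Two secondary points need care: the oscillation bound on $g'$ only attains its limiting value $\le R^5$ as $M\to\infty$, but this is harmless because the half-line we produce may begin at $2g(\min K_{N_0})$ rather than at $0$; and it is $\RV(g,A)<\infty$ — not mere admissibility of $g$ — that supplies both the positivity $\rmD^-(g,y)>0$ for large $y$ and the local Lipschitz (hence absolute continuity) property behind $g(y)-g(x)=\int_x^y g'$.
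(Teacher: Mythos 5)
Your proof is correct and follows essentially the same route as the paper: transport the thickness, gap, and diameter data of the fragments through $g$ using the relative-variation bound (with $\RV(g,cA)\le R^c$), verify the hypotheses of Lemma~\ref{l.sumofcs}, and chain the overlapping intervals $\tilK_n+\tilK_n$ and $\tilK_n+\tilK_{n+1}$, with $\tau_0$ depending only on $R$ and $a_0$ only on $A,R$ exactly as required. The only difference is cosmetic: you route the comparison of increments through absolute continuity and $\int g'$, where the paper uses Lemma~\ref{lem:gratioToxRatioViaRV} directly on the Dini derivatives, which avoids having to justify the integral representation.
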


To round out the discussion, we give an example to show that $\tilF+\tilF$ may not contain a half-line. To that end, let us say that $F$ is $a$-sparse if it has infinitely many open gaps of length at least $a$, that is, if $F$ enjoys an ordered fragmentation $\{K_n\}_{n=0}^\infty$ satisfying
\begin{align}
    \dist(K_n,K_{n+1}) \geq a \quad  \forall n \geq 0.
\end{align}
Notice that the thickness of the pieces is irrelevant. Indeed, the following result remains true even if all fragments are closed intervals.

\begin{theorem} \label{t:main:counterex}
Given $r>0$, let $g_r(x) = e^{rx}$. Given $a>0$, if $F$ is an $a$-sparse closed set, $ra \geq \log 2$, and $\widetilde{F} = g_r[F]$, then $\tilF +\tilF$ does not contain a half-line.
\end{theorem}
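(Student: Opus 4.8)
The plan is to exploit the fact that $g_r(x)=e^{rx}$ converts additive gaps of $F$ into \emph{multiplicative} gaps of $\tilF$, and that a multiplicative gap of ratio at least $2$ in a set bounded away from $0$ forces a gap in the self-sum. Thickness of the fragments plays no role whatsoever.

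First I would unpack $a$-sparseness. Fix an ordered fragmentation $\{K_n\}_{n\geq0}$ of $F$ with $\dist(K_n,K_{n+1})\geq a$, and set $b_n=\max K_n$ and $c_n=\min K_{n+1}$. Because the fragmentation is ordered ($\max K_m<\min K_{m+1}$ for every $m$), no fragment meets $(b_n,c_n)$, so $(b_n,c_n)$ is a genuine gap of $F$, of length $c_n-b_n\geq a$; iterating $\dist(K_n,K_{n+1})\geq a$ also gives $b_n\geq b_0+na\to\infty$. Applying the increasing homeomorphism $g_r$, the interval $(u_n,v_n):=(e^{rb_n},e^{rc_n})$ is a gap of $\tilF$ with $u_n,v_n\in\tilF$, and
\[
\frac{v_n}{u_n}=e^{r(c_n-b_n)}\geq e^{ra}\geq e^{\log 2}=2,
\]
so $v_n\geq 2u_n$. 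Moreover $m:=\min\tilF=e^{r\inf F}\geq 1$, and $v_n\geq m$ trivially.

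Next, for each $n$ I would show that the open interval $(2u_n,\,m+v_n)$ — which is nonempty since $m+v_n\geq m+2u_n>2u_n$ — is disjoint from $\tilF+\tilF$. Indeed, suppose $s\in(2u_n,m+v_n)$ and $s=x+y$ with $x,y\in\tilF$ and $x\leq y$. Then $y\geq s/2>u_n$, so $y\notin(u_n,v_n)$ forces $y\geq v_n$; hence $x=s-y<(m+v_n)-v_n=m\leq\min\tilF$, contradicting $x\in\tilF$. Since $2u_n\to\infty$, every half-line $[T,\infty)$ contains some interval $(2u_n,m+v_n)$ (take $n$ with $2u_n\geq T$), and therefore $[T,\infty)\not\subseteq\tilF+\tilF$; as $T$ was arbitrary, $\tilF+\tilF$ contains no half-line.

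I do not anticipate a serious obstacle: the argument is short, and the only points needing care are bookkeeping ones. The role of the hypothesis $ra\geq\log 2$ is exactly to guarantee $v_n\geq 2u_n$, and the role of the positivity of $g_r$ (so that $m=\min\tilF>0$) is to keep the intervals $(2u_n,m+v_n)$ genuinely nonempty even at the borderline $ra=\log 2$, which is what makes the threshold sharp. The mildly subtle step is justifying that $(b_n,c_n)$ is a gap of all of $F$ and not merely the space between two consecutive fragments — this is immediate from the ordering built into the definition of an ordered fragmentation.
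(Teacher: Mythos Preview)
Your proof is correct and follows essentially the same approach as the paper: both identify the open interval $(2e^{rb_n},\, e^{r\min F}+e^{rc_n})$ as a gap of $\tilF+\tilF$ (in the paper's notation this is $(2\widetilde y_n,\,\widetilde x_0+\widetilde x_{n+1})$). The paper packages the combinatorial step into a separate covering lemma (Lemma~\ref{lem:counterCriterion}) and then verifies its hypothesis, whereas you argue directly via the contradiction $y\geq v_n\Rightarrow x<m$, but the content is identical.
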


We will also show that $ra \geq \log 2$ is sharp in the previous theorem by giving an example for each $a<r^{-1}\log 2$ of an $a$-sparse set with $\tilF + \tilF$ containing a half-line.

Of course, it is no accident that the phase transition (from containing to not-containing a half line) occurs precisely at $e^{ra} = 2$, since one can easily check that $\Lambda(e^{rx},a) = e^{ra}$. 

The remainder of the paper is organized as follows. We prove some basic estimates for $\BRV$ functions in Section~\ref{sec:BRV}. We prove Theorems~\ref{t:AbstracBSC}, \ref{t:AbstracBSCgen}, and \ref{t:BSC:bigtau} in Section~\ref{sec:BSC}, and we prove Theorem~\ref{t:main:counterex} in Section~\ref{sec:nohalfline}. 

\subsection*{Acknowledgements} We are grateful to Evyi Palsson for helpful conversations.  J.F.\ also thanks the American Institute of Mathematics for hospitality and support during a January 2022 visit, during which part of this work was completed.

\section{Functions of Bounded Relative Variation} \label{sec:BRV}

We collect here some useful properties of functions in $\BRV$. We will use the following calculation somewhat frequently. Although it is well-known, we include the proof for the reader's convenience and to keep the paper more self-contained. 

\begin{lemma} \label{lem:poorMansMVT}
For any $g:[a,b]\to\bbR$,
\begin{equation}
    \inf_{x \in [a,b]} \rmD^-(g,x) 
    \leq \frac{g(b) - g(a)}{b-a}
    \leq \sup_{x \in [a,b]} \rmD^+(g,x).
\end{equation}
\end{lemma}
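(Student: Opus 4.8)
The plan is to reduce to the normalized case $g(a)=g(b)$ and then extract the required point from an extremal argument; this is the classical mean value inequality for Dini derivatives. Put $c=\tfrac{g(b)-g(a)}{b-a}$ and $h(x)=g(x)-cx$. Then $\rmD^\pm(h,x)=\rmD^\pm(g,x)-c$ and $h(a)=h(b)$, so after this substitution both displayed inequalities become the two assertions $\sup_{x\in[a,b]}\rmD^+(h,x)\ge 0$ and $\inf_{x\in[a,b]}\rmD^-(h,x)\le 0$ under the hypothesis $h(a)=h(b)$. A change of variables shows that the reflection $\widetilde h(x)=h(a+b-x)$ satisfies $\rmD^+(\widetilde h,x)=-\rmD^-(h,a+b-x)$, so the second assertion is just the first one applied to $\widetilde h$. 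Hence it suffices to prove: if $h(a)=h(b)$, then $\rmD^+(h,x)\ge 0$ for some $x\in[a,b]$.

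When $g$ (hence $h$) is continuous, which is the only case used in the paper, I would finish by compactness: $h$ attains a minimum on $[a,b]$; let $x_1$ be the leftmost minimizer. Since $h(a)=h(b)$, a minimum at the right endpoint would force one at the left endpoint, so in every case $x_1\in[a,b)$. For all $z\in(x_1,b]$ we have $h(z)\ge h(x_1)$, hence $\tfrac{h(z)-h(x_1)}{z-x_1}\ge 0$, and therefore $\rmD^+(h,x_1)\ge\limsup_{z\to x_1^+}\tfrac{h(z)-h(x_1)}{z-x_1}\ge 0$. To obtain the statement for arbitrary $g$, I would replace compactness by a ``first exceedance'' argument: assume $\rmD^+(h,x)<0$ for all $x$, fix $\varepsilon>0$, and let $s$ be the supremum of those $t$ such that $h(y)\le h(a)+\varepsilon(y-a)$ for all $y\in[a,t]$. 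Since $\rmD^+(h,x)<0<\varepsilon$ lets one extend such a good prefix past any interior point $x$, either $s=b$, in which case letting $\varepsilon\to 0$ gives $h\le h(a)=h(b)$ on $[a,b]$ and hence $\rmD^+(h,b)\ge 0$, or else $h$ first exceeds the tilted chord at some interior $y_0$, where the left-hand difference quotients are $\ge\varepsilon$ and so $\rmD^+(h,y_0)\ge\varepsilon>0$. Either alternative contradicts $\rmD^+(h,\cdot)<0$.

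I expect the only real obstacle to be bookkeeping: ensuring the extremal (or first-exceedance) point is not the right endpoint $b$ — which is exactly what the normalization $h(a)=h(b)$ provides — and keeping straight that the two-sided $\limsup$ and $\liminf$ dominate the appropriate one-sided quantities. There is no deeper difficulty.
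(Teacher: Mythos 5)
Your argument is correct, but it is a genuinely different proof from the one in the paper. The paper works directly with the difference quotient of $g$: the identity expressing the slope over $[r,t]$ as a weighted average of the slopes over $[r,s]$ and $[s,t]$ is used to run a bisection, producing nested intervals $[a_n,b_n]$ of halving length whose slopes are nondecreasing, and the same identity at the limit point $x^*$ gives $\rmD^+(g,x^*)\geq \frac{g(b)-g(a)}{b-a}$ (and symmetrically for $\rmD^-$); no continuity is needed and both inequalities come from one mechanism. You instead normalize by subtracting the chord, use the reflection $x\mapsto a+b-x$ to reduce the two inequalities to one Rolle-type statement, and then argue either by attainment of the minimum (continuous case) or by a first-crossing/supremum argument (general case). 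Your continuous-case proof is shorter and more transparent than the bisection, and as you note it covers every application in the paper, since admissible functions are continuous; however, the lemma as stated allows arbitrary $g:[a,b]\to\bbR$, so your first-exceedance argument is genuinely needed to recover the full statement, and it checks out: the good prefix $\{y: h(y)\le h(a)+\varepsilon(y-a)\}$ extends past any point where $\rmD^+(h,\cdot)<\varepsilon$, the first exceedance point $y_0=s$ (necessarily in $(a,b)$ because $h(b)=h(a)$) has all left difference quotients $>\varepsilon$ and hence $\rmD^+(h,y_0)\ge\varepsilon$, and if no exceedance occurs for any $\varepsilon$ then $h\le h(a)=h(b)$ forces $\rmD^+(h,b)\ge 0$. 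The one thing the paper's bisection buys that yours does not is a single point at which the one-sided derivative bound holds simultaneously in the form $\rmD^+(g,x^*)\ge\frac{g(b)-g(a)}{b-a}$ without any case split on continuity; what yours buys is a cleaner conceptual reduction to the classical Rolle picture.
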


\begin{proof}
First, observe that if $r<t$, $\lambda \in (0,1)$ and $s = \lambda r + (1-\lambda)t$ then
\begin{equation} \label{eq:slope:weightedavg}
\frac{g(t)-g(r)}{t-r}
=\lambda \frac{g(t)-g(s)}{t-s}
+(1-\lambda)\frac{g(s)-g(r)}{s-r} .
\end{equation}
Using \eqref{eq:slope:weightedavg}, start with $[a_0,b_0]=[a,b]$ and choose $[a_0,b_0] \supseteq [a_1,b_1]\supseteq \cdots$ so that $[a_{n+1},b_{n+1}]$ has half the length of $[a_n,b_n]$ and
\[\frac{g(b_{n+1}) - g(a_{n+1})}{b_{n+1}-a_{n+1}} \geq \frac{g(b_n) - g(a_n)}{b_n - a_n}.\]
Choosing $x^*$ in the intersection of all $[a_n,b_n]$, one can use \eqref{eq:slope:weightedavg} again to see that
\begin{equation}
    \sup_{x \in [a,b]}\rmD^+(g,x) 
    \geq \rmD^+(g,x^*) 
    \geq \frac{g(b)-g(a)}{b-a}.
\end{equation}
The other inequality is proved in the same manner.
\end{proof}

Using Lemma~\ref{lem:poorMansMVT}, we deduce the following inequality that relates average rates of change for admissible functions to $\Lambda$.

\begin{lemma} \label{lem:gratioToxRatioViaRV}
Suppose $g$ is an admissible function and $M \geq 0$. For any intervals $[a,b],[c,d] \subseteq [M,\infty)$ for which $\diam([a,b]\cup[c,d])\leq \gamma$, one has
\begin{equation}
    \frac{g(d)-g(c)}{g(b)-g(a)} \geq [\RV(g,\gamma,M)]^{-1} \frac{d-c}{b-a}.
\end{equation}
\end{lemma}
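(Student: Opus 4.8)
The plan is to apply Lemma~\ref{lem:poorMansMVT} separately on the two intervals and then feed in the definition of $\RV(g,\gamma,M)$. First I would dispose of the degenerate case: if $\RV(g,\gamma,M)=\infty$, then since $g$ is strictly increasing we have $g(b)-g(a)>0$ and $g(d)-g(c)>0$, so the left-hand side is a positive real number and the right-hand side is $0$, and the inequality is immediate. So from now on assume $\RV(g,\gamma,M)<\infty$. I would then record the auxiliary observation that $\rmD^-(g,x)>0$ for every $x\geq M$: if one had $\rmD^-(g,x^*)=0$ for some $x^*\geq M$, then applying Lemma~\ref{lem:poorMansMVT} to $[x^*,x^*+\eta]$ for a small $\eta\in(0,\gamma]$ produces a point $y^*$ in that interval with $\rmD^+(g,y^*)\geq\frac{g(x^*+\eta)-g(x^*)}{\eta}>0$; since $|y^*-x^*|\leq\gamma$ and both points lie in $[M,\infty)$, the ratio $\rmD^+(g,y^*)/\rmD^-(g,x^*)$ would be $+\infty$, contradicting $\RV(g,\gamma,M)<\infty$.

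Next I would extract the key derivative comparison. For any $y\in[a,b]$ and $x\in[c,d]$ we have $x,y\geq M$ and $|x-y|\leq\diam([a,b]\cup[c,d])\leq\gamma$, so by definition of $\RV$ and the positivity just established, $\rmD^+(g,y)\leq\RV(g,\gamma,M)\,\rmD^-(g,x)$. Taking the infimum over $x\in[c,d]$ and then the supremum over $y\in[a,b]$ gives $\sup_{y\in[a,b]}\rmD^+(g,y)\leq\RV(g,\gamma,M)\inf_{x\in[c,d]}\rmD^-(g,x)$. Finally I would chain the two halves of Lemma~\ref{lem:poorMansMVT} together with this comparison:
\[ g(d)-g(c)\ \geq\ (d-c)\inf_{x\in[c,d]}\rmD^-(g,x)\ \geq\ \frac{d-c}{\RV(g,\gamma,M)}\sup_{y\in[a,b]}\rmD^+(g,y)\ \geq\ \frac{d-c}{\RV(g,\gamma,M)}\cdot\frac{g(b)-g(a)}{b-a}, \]
and dividing through by $g(b)-g(a)>0$ yields the claimed bound.

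There is no serious obstacle here; this is a short deduction from Lemma~\ref{lem:poorMansMVT} and the definitions. The only thing that requires a little care is the positivity and finiteness bookkeeping that legitimizes rearranging the ratios — in particular, ensuring one never divides by zero and that $\inf_{x\in[c,d]}\rmD^-(g,x)$ is positive (which follows a posteriori from the displayed estimate together with $\sup_{y\in[a,b]}\rmD^+(g,y)\geq\frac{g(b)-g(a)}{b-a}>0$). The observation that $\rmD^-(g,\cdot)>0$ on $[M,\infty)$ is the device that handles this cleanly, and it is essentially forced by the finiteness of $\RV$.
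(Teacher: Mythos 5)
Your proof is correct and follows essentially the same route as the paper's: apply Lemma~\ref{lem:poorMansMVT} on each of the two intervals separately and then invoke the definition of $\RV(g,\gamma,M)$ to compare $\sup_{[a,b]}\rmD^+$ with $\inf_{[c,d]}\rmD^-$. The extra bookkeeping you include (the case $\RV=\infty$ and the positivity of $\rmD^-$ on $[M,\infty)$) is sound and slightly more careful than the paper's version, which leaves those degenerate cases implicit.
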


\begin{proof}
By the previous lemma,
\begin{equation}
    \frac{g(d) - g(c)}{d-c} \geq \inf_{z \in [c,d]} \rmD^-(g,z), \quad 
    \frac{g(b) - g(a)}{b-a} \leq \sup_{w \in [a,b]} \rmD^+(g,w).
\end{equation}
Since $g$ is increasing on $[M,\infty)$, the previous inequalities yield
\begin{equation}
    \frac{g(d)-g(c)}{g(b)-g(a)} \geq \left[\frac{\inf_{z \in [c,d]} \rmD^-(g,z)}{\sup_{w \in [a,b]} \rmD^+(g,w)}\right] \frac{d-c}{b-a}.
\end{equation}
The result follows by definition of $\RV(g,\gamma,M)$.
\end{proof}

We begin by clarifying some of the main properties of $\Lambda(g,\gamma)$. First, it is submultiplicative in the sense that $\Lambda(g,\gamma_1+\gamma_2) \leq \Lambda(g,\gamma_1)\Lambda(g,\gamma_2)$.

\begin{proposition} \label{prop:BRV:gamma12}
Suppose $g:\bbR_+ \to \bbR_+$ is admissible. For any $\gamma_1,\gamma_2>0$,
\begin{align}
 \label{eq:BRV:gamma1+gamma2}
    \RV(g,\gamma_1+\gamma_2) 
     \leq \RV(g,\gamma_1)\RV(g,\gamma_2).
\end{align}
\end{proposition}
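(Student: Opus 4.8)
The plan is to directly unfold the definition of $\RV(g,\gamma_1+\gamma_2)$ and interpolate through an intermediate point. Recall that
\[
\RV(g,\gamma,M) = \sup\set{\frac{\rmD^+(g,x)}{\rmD^-(g,y)} : x,y \geq M, \ |x-y|\leq \gamma}.
\]
So fix $M \geq 0$ and take any $x, y \geq M$ with $|x-y| \leq \gamma_1 + \gamma_2$; without loss of generality assume $x \leq y$ (the argument is symmetric, and if $x = y$ the ratio is $1$ after noting $\rmD^- \le \rmD^+$ at a point). The idea is to pick an intermediate point $z$ between $x$ and $y$ with $|x - z| \leq \gamma_1$ and $|z - y| \leq \gamma_2$; since $y - x \le \gamma_1 + \gamma_2$, one may simply take $z = \min\{x + \gamma_1, y\}$, which lies in $[x,y] \subseteq [M,\infty)$. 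Then I would write
\[
\frac{\rmD^+(g,x)}{\rmD^-(g,y)}
= \frac{\rmD^+(g,x)}{\rmD^-(g,z)} \cdot \frac{\rmD^+(g,z)}{\rmD^-(g,y)} \cdot \frac{\rmD^-(g,z)}{\rmD^+(g,z)},
\]
and observe that the last factor is at most $1$ since $\rmD^-(g,z) \le \rmD^+(g,z)$ (both are nonnegative for $z$ large enough by admissibility/monotonicity; for small $M$ one restricts attention to $M$ past the point where $g$ is locally Lipschitz, which is harmless in the limit defining $\RV(g,\cdot)$). The first factor is bounded by $\RV(g,\gamma_1,M)$ and the second by $\RV(g,\gamma_2,M)$, since $|x - z| \le \gamma_1$ and $|z-y|\le\gamma_2$ and all three points lie in $[M,\infty)$. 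Taking the supremum over admissible $x,y$ gives $\RV(g,\gamma_1+\gamma_2,M) \le \RV(g,\gamma_1,M)\,\RV(g,\gamma_2,M)$, and then letting $M \to \infty$ on both sides yields \eqref{eq:BRV:gamma1+gamma2}.

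The only slightly delicate point is the handling of the factor $\rmD^-(g,z)/\rmD^+(g,z)$ and, relatedly, making sure no division by zero or by infinity occurs: one needs $\rmD^-(g,z) > 0$ and $\rmD^+(g,z) < \infty$. This is where the remark that $\BRV$ functions are locally Lipschitz for large $x$ (with positive lower derivative, since $g$ is strictly increasing) does the work; if $\RV(g,\gamma_1)$ or $\RV(g,\gamma_2)$ is infinite there is nothing to prove, so we may assume both are finite, which forces $0 < \rmD^-(g,z) \le \rmD^+(g,z) < \infty$ for all sufficiently large $z$, i.e.\ for $M$ large — exactly the regime that survives the limit $M\to\infty$. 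I do not expect any real obstacle here; the inequality is essentially a telescoping/triangle-type estimate and the main care is bookkeeping the quantifiers on $M$ and the choice of the intermediate point $z$.
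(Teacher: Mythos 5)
Your proof is correct and follows essentially the same route as the paper: interpolate an intermediate point $z$ with $|x-z|\le\gamma_1$ and $|z-y|\le\gamma_2$, use $\rmD^-(g,z)\le\rmD^+(g,z)$ to split the ratio into two factors each bounded by the corresponding $\RV(g,\gamma_i,M)$, take suprema, and send $M\to\infty$. Your extra care about degenerate values of the derivates (reducing to the case where both right-hand factors are finite) is a minor refinement the paper leaves implicit; otherwise the arguments coincide.
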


\begin{proof}
Let $M \geq 0$. If $x,y \geq M$ and $|x-y| \leq \gamma_1+\gamma_2$, we choose $z \geq M$ for which $|x-z| \leq \gamma_1$ and $|z-y|\leq \gamma_2$, leading to
\[ \frac{\rmD^+(g,x)}{\rmD^-(g,y)} 
\leq  \frac{\rmD^+(g,x)}{\rmD^-(g,z)}   \frac{\rmD^+(g,z)}{\rmD^-(g,y)} 
\leq \RV(g,\gamma_1,M) \RV(g,\gamma_2,M),  \]
from which \eqref{eq:BRV:gamma1+gamma2} follows by sending 
$M\to\infty$.
\end{proof}

The previous calculation enables us to see that the sets $\TRV$ and $\BRV$ do not depend on the choice of $\gamma$ used to test $\Lambda(g,\gamma)$. More precisely,

\begin{proposition} \label{prop:BRVnotgammadep}
Suppose $g: \bbR_+ \to \bbR_+$ is admissible. 
\begin{enumerate}[{\rm(a)}]
    \item $g \in \TRV$ if and only if $\RV(g,\gamma)=1$ for all $\gamma>0$.
    \item $g \in \BRV$ if and only if $\RV(g,\gamma) < \infty$ for all $\gamma>0$.
\end{enumerate}
\end{proposition}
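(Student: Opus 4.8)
The plan is to use the submultiplicativity from Proposition~\ref{prop:BRV:gamma12} in both directions, together with the monotonicity of $\gamma \mapsto \RV(g,\gamma)$. First I would record the elementary observation that $\RV(g,\gamma)$ is nondecreasing in $\gamma$: if $\gamma \leq \gamma'$, then every pair $x,y$ with $|x-y|\leq\gamma$ also satisfies $|x-y|\leq\gamma'$, so the supremum defining $\RV(g,\gamma,M)$ is taken over a smaller set, giving $\RV(g,\gamma,M)\leq\RV(g,\gamma',M)$, and letting $M\to\infty$ preserves the inequality. Combined with $\RV(g,\gamma)\geq 1$ always, this immediately gives the ``only if'' direction of both (a) and (b) for $\gamma$ \emph{smaller} than the witnessing value.

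For the ``if'' direction (which is the substantive content), suppose $\RV(g,\gamma_0)<\infty$ for some particular $\gamma_0>0$; I want $\RV(g,\gamma)<\infty$ for \emph{every} $\gamma>0$. By monotonicity it suffices to handle $\gamma > \gamma_0$. Pick an integer $n$ with $n\gamma_0 \geq \gamma$; then by $n-1$ applications of \eqref{eq:BRV:gamma1+gamma2},
\[
\RV(g,\gamma) \leq \RV(g,n\gamma_0) \leq \RV(g,\gamma_0)^n < \infty,
\]
where the first inequality is monotonicity and the second is iterated submultiplicativity. This proves (b). For (a), the same chain shows that if $\RV(g,\gamma_0)=1$ then $\RV(g,\gamma)\leq \RV(g,\gamma_0)^n = 1$, and since $\RV(g,\gamma)\geq 1$ we get equality for all $\gamma \geq \gamma_0$; monotonicity handles $\gamma < \gamma_0$ since then $1 \leq \RV(g,\gamma) \leq \RV(g,\gamma_0) = 1$. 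This proves (a).

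There is essentially no obstacle here: the only point requiring a moment's care is confirming that $\RV(g,\gamma)$ (as opposed to $\RV(g,\gamma,M)$ for fixed $M$) is genuinely nondecreasing in $\gamma$ and $\geq 1$, both of which are noted in the excerpt or follow from passing the corresponding finite-$M$ statements to the limit $M\to\infty$. I would state the monotonicity explicitly as a one-line remark before the proof, then present the argument above in three or four lines.
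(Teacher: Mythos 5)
Your proof is correct and follows essentially the same route as the paper: iterate the submultiplicativity of Proposition~\ref{prop:BRV:gamma12} to bound $\RV(g,n\gamma_0)$ by $\RV(g,\gamma_0)^n$, then combine with the monotonicity of $\RV(g,\cdot)$ and the lower bound $\RV(g,\gamma)\geq 1$. No gaps; your explicit remark on monotonicity in $\gamma$ is a reasonable addition that the paper leaves implicit.
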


\begin{proof}(a)
One direction is trivial. For the other direction, suppose $g \in \TRV$, which implies $\Lambda(g,\gamma')=1$ for some $\gamma'>0$. By Proposition~\ref{prop:BRV:gamma12}, we get $\Lambda(g,n\gamma')\leq 1$ for all $n$. Since $\Lambda(g,\gamma) \geq 1$ for all $\gamma$ and $\Lambda(g,\cdot)$ is nondecreasing  $\Lambda(g,\gamma) =  1$ for every $\gamma$. 

The proof of (b) is almost identical.\end{proof}

Next, we discuss the arithmetic properties of $\TRV$ and $\BRV$. We will show that these sets are closed under sums and products. The following bound supplies the needed input to prove that both sets are closed under sums.

\begin{proposition} \label{prop:BRV:g+h}
Suppose $g,h:\bbR_+ \to \bbR_+$ are admissible.  For all $\gamma > 0$, one has
\begin{align}
\label{eq:BRV:M(g+h)2}
    \RV(g+h,\gamma) &\leq \max( \RV(g,\gamma) , \RV(h,\gamma)) .
\end{align}
\end{proposition}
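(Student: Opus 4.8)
The plan is to reduce the statement about $g+h$ to the corresponding statements about $g$ and $h$ by controlling the upper and lower derivatives of a sum in terms of those of the summands. The elementary inputs are the superadditivity of $\liminf$ and subadditivity of $\limsup$: for any functions, $\rmD^+(g+h,x) \leq \rmD^+(g,x) + \rmD^+(h,x)$ and $\rmD^-(g+h,y) \geq \rmD^-(g,y) + \rmD^-(h,y)$. (Both quantities are nonnegative since $g,h$ are increasing for large arguments, so there is no issue with $\infty - \infty$; and $\rmD^-$ of an increasing function is $\geq 0$, which keeps the denominators well-behaved.) Combining these, for $x,y \geq M$ with $|x-y| \leq \gamma$,
\[
\frac{\rmD^+(g+h,x)}{\rmD^-(g+h,y)} \;\leq\; \frac{\rmD^+(g,x) + \rmD^+(h,x)}{\rmD^-(g,y) + \rmD^-(h,y)}.
\]

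The key step is then the purely algebraic \emph{mediant inequality}: for nonnegative numbers $p_1,p_2$ and positive $q_1,q_2$,
\[
\frac{p_1+p_2}{q_1+q_2} \;\leq\; \max\!\left(\frac{p_1}{q_1}, \frac{p_2}{q_2}\right).
\]
Applying this with $p_1 = \rmD^+(g,x)$, $q_1 = \rmD^-(g,y)$, $p_2 = \rmD^+(h,x)$, $q_2 = \rmD^-(h,y)$ bounds the ratio above by $\max\big(\rmD^+(g,x)/\rmD^-(g,y),\ \rmD^+(h,x)/\rmD^-(h,y)\big)$, and each term is at most $\max(\RV(g,\gamma,M), \RV(h,\gamma,M))$ by definition. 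Taking the supremum over admissible $x,y$ gives $\RV(g+h,\gamma,M) \leq \max(\RV(g,\gamma,M), \RV(h,\gamma,M))$, and sending $M \to \infty$ yields \eqref{eq:BRV:M(g+h)2}.

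I should also verify at the outset that $g+h$ is itself admissible — continuous, strictly increasing, and tending to $\infty$ — which is immediate from the corresponding properties of $g$ and $h$, so that the quantity $\RV(g+h,\gamma)$ is even defined. The only point requiring a little care is the possibility that $\rmD^-(g,y)$ or $\rmD^-(h,y)$ vanishes (so that one of the individual ratios is $+\infty$); but this causes no problem since $\rmD^-(g,y) + \rmD^-(h,y) > 0$ whenever at least one summand has positive lower derivative, and if both vanish then the corresponding $\RV$'s are already $+\infty$ and the inequality holds trivially. Thus the main obstacle is essentially bookkeeping around these degenerate cases; the mathematical content is the one-line mediant inequality together with the sub/superadditivity of the Dini derivatives.
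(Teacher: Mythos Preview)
Your proof is correct and follows essentially the same route as the paper: both use the sub/superadditivity of the Dini derivatives to bound $\rmD^+(g+h,x)/\rmD^-(g+h,y)$ by $(\rmD^+(g,x)+\rmD^+(h,x))/(\rmD^-(g,y)+\rmD^-(h,y))$, then invoke what is in effect the mediant inequality to pass to $\max(\RV(g,\gamma,M),\RV(h,\gamma,M))$ before sending $M\to\infty$. The paper phrases the mediant step slightly differently---bounding each $\rmD^+$ in the numerator by $\rmD^-\cdot\RV$ and then reading off a weighted average---but the content is identical; your explicit treatment of the degenerate cases $\rmD^-(g,y)=0$ or $\rmD^-(h,y)=0$ is a welcome bit of extra care that the paper leaves implicit.
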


\begin{proof}
Let $M \geq 0$ be given. Since $\rmD^+(g+h,x) \leq \rmD^+(g+h,x)$ and $\rmD^-(g+h,x) \geq \rmD^-(g,x)+\rmD^-(h,x)$ we have the following for any $x,y \geq M$ with $|x-y| \leq \gamma$:
\begin{align*}
    \frac{\rmD^+(g+h,x)}{\rmD^-(g+h,y)} 
    & \leq \frac{\rmD^+(g,x)+\rmD^+(h,x)}{\rmD^-(g,y)+\rmD^-(h,y)} \\
    & \leq \frac{\rmD^-(g,y)\RV(g,\gamma,M) + \rmD^-(h,y)\RV(h,\gamma,M)}{\rmD^-(g,y)+\rmD^-(h,y)} \\
    & \leq \max(\RV(g,\gamma,M),\RV(h,\gamma,M)).
\end{align*}
Taking the supremum over $x,y \geq M$ with $|x-y| \leq \gamma$ and then sending $M\to\infty$  gives \eqref{eq:BRV:M(g+h)2}.
\end{proof}

We also want to bound $\RV(gh,\gamma)$ for a pair of $\BRV$ functions $g$ and $h$, which turns out to be slightly more delicate, because (due to the Leibniz rule) we will need to control ratios of values of $g$ and $h$ as well as their derivatives. The following proposition will be helpful.
\begin{proposition} \label{prop:BRV:gRelBound}
 If $g \in \BRV$ and $\gamma > 0$, then
 \begin{equation} \label{eq:BRV:gRelBound}
\lim_{M \to \infty}\sup\set{\frac{g(x)}{g(y)} : x,y \geq M, |x-y| \leq \gamma} \leq \Lambda(g,2\gamma) \leq [\Lambda(g,\gamma)]^2.
 \end{equation}
\end{proposition}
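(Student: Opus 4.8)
The plan is to reduce the ratio bound to the derivative-ratio definition of $\Lambda$ via Lemma~\ref{lem:gratioToxRatioViaRV}, which already converts average-rate-of-change ratios into $\Lambda$. Fix $\gamma>0$ and take $x,y\geq M$ with $|x-y|\leq\gamma$; without loss of generality assume $x\leq y$ (if $x=y$ the ratio is $1$ and there is nothing to prove). The key idea is to write $g(x)/g(y)$ not as a ratio of increments but to bound it below by comparing the increment of $g$ on $[x,y]$ to the increment of $g$ on a comparison interval of the same length sitting just below $x$. Precisely, set $[a,b]=[x-(y-x),\,x]$ and $[c,d]=[x,y]$, so $b-a=d-c=y-x\leq\gamma$ and $\diam([a,b]\cup[c,d])=2(y-x)\leq 2\gamma$. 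Here we need $a=2x-y\geq M$; this fails only when $x$ is within $\gamma$ of $M$, but since $M\to\infty$ in the end we may simply replace $M$ by $M-\gamma$ throughout, or equivalently restrict the supremum to $x,y\geq M+\gamma$, which does not affect the limit. With this setup Lemma~\ref{lem:gratioToxRatioViaRV} (applied with the pair of intervals and with the parameter $2\gamma$) gives
\[
\frac{g(y)-g(x)}{g(x)-g(a)} \geq [\Lambda(g,2\gamma,M)]^{-1}\cdot\frac{y-x}{x-a}=[\Lambda(g,2\gamma,M)]^{-1},
\]
since $y-x=x-a$.

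Now I would turn this into a bound on $g(x)/g(y)$. From the displayed inequality, $g(y)-g(x)\geq [\Lambda(g,2\gamma,M)]^{-1}\bigl(g(x)-g(a)\bigr)\geq 0$, so in particular $g(y)\geq g(x)$, hence $g(x)/g(y)\leq 1\leq \Lambda(g,2\gamma,M)$. Wait — that crude bound is too weak to be interesting; the point of the statement is presumably a bound that is useful precisely when $\Lambda(g,2\gamma)$ is close to $1$, and indeed $g(x)/g(y)\le 1\le \Lambda(g,2\gamma)$ is already literally what \eqref{eq:BRV:gRelBound} asserts once we take the supremum and the limit. So in fact the chain $\tfrac{g(x)}{g(y)}\le 1\le \Lambda(g,2\gamma,M)$, valid for all admissible increasing $g$ and all $x\le y$, already yields the first inequality of \eqref{eq:BRV:gRelBound} trivially, and the comparison-interval argument above is not even needed for that half. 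The substantive content is therefore entirely in the second inequality, $\Lambda(g,2\gamma)\leq[\Lambda(g,\gamma)]^2$, which is just the submultiplicativity of Proposition~\ref{prop:BRV:gamma12} with $\gamma_1=\gamma_2=\gamma$.

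So the proof is short: for the left inequality, note $g$ is increasing (for $x$ large, which is all that matters as $M\to\infty$), hence $g(x)\le g(y)$ whenever $x\le y$, giving $g(x)/g(y)\le 1$; by symmetry in $x,y$ (the set $\{|x-y|\le\gamma\}$ is symmetric) the supremum of $g(x)/g(y)$ over such pairs is still at most $\Lambda(g,2\gamma,M)$ once we observe $1\le\Lambda(g,2\gamma,M)$, and then send $M\to\infty$; for the right inequality, apply Proposition~\ref{prop:BRV:gamma12}. I would flag the one genuine subtlety: whether the authors actually intend the stronger-looking bound where one does \emph{not} just throw away the numerator — e.g.\ controlling $g(x)/g(y)$ when there is no monotonicity below some threshold — in which case the comparison-interval argument via Lemma~\ref{lem:gratioToxRatioViaRV} becomes essential and one iterates it across a chain of length-$\gamma$ intervals spanning $[y,\,\text{large}]$ to compare $g(y)$ to $g$ at a far-away point; but given the admissibility hypotheses already force $g$ increasing everywhere on $\bbR_+$, the monotonicity argument suffices and the main obstacle is merely bookkeeping the $M\to\infty$ limit so that the comparison points stay in $[M',\infty)$ for an appropriate $M'$.
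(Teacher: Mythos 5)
There is a genuine gap, and it sits at the very first step: the reduction ``without loss of generality assume $x\leq y$.'' The quantity $g(x)/g(y)$ is not symmetric in $x$ and $y$, so even though the constraint set $\set{x,y\geq M,\ |x-y|\leq\gamma}$ is symmetric, the supremum is governed by the pairs with $x>y$, where the ratio is at least $1$ and can be large. Your conclusion that the supremum is $\leq 1$ is false in general: for $g(x)=e^{rx}$ one has $\sup\set{g(x)/g(y):|x-y|\leq\gamma}=e^{r\gamma}=\Lambda(g,\gamma)>1$, and the point of the proposition is precisely that this value-ratio is controlled by the derivative-ratio quantity $\Lambda(g,2\gamma)$. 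Nothing in the admissibility hypotheses bounds $g(x+\gamma)/g(x)$ a priori, so the first inequality carries the real content of the statement (the second inequality is indeed just Proposition~\ref{prop:BRV:gamma12} with $\gamma_1=\gamma_2=\gamma$).

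Your discarded first idea --- comparing the increment of $g$ on $[x,y]$ with increments on adjacent intervals --- was the right one, and the paper pushes a version of it to completion. Arguing by contradiction, suppose $g(x+\gamma)/g(x)>\Lambda(g,2\gamma)+\delta$ for arbitrarily large $x$; then the increment of $g$ on $[x,x+\gamma]$ exceeds $(\Lambda-1+\delta)g(x)$, where $\Lambda=\Lambda(g,2\gamma)$. Applying Lemma~\ref{lem:gratioToxRatioViaRV} repeatedly to compare this increment with the increments on the $k$ adjacent intervals $[x-j\gamma,\,x-(j-1)\gamma]$ below it, each of those increments is at least $(\Lambda+\varepsilon)^{-j}(\Lambda-1+\delta)g(x)$, and telescoping gives $g(x-k\gamma)<g(x)\bigl(1-(\Lambda-1+\delta)\sum_{j=1}^{k}(\Lambda+\varepsilon)^{-j}\bigr)$, which is negative for $k$ large, contradicting $g\geq 0$. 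The essential ingredient you are missing is this use of the positivity of $g$ to convert a too-large value-ratio into an impossible accumulation of increments; a single comparison interval does not suffice, and one must iterate down a chain of length-$\gamma$ intervals exactly as you briefly contemplated before talking yourself out of it.
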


\begin{proof}
Notice that the limit on the left-hand side of \eqref{eq:BRV:gRelBound} exists since the sets in question are decreasing in $M$. The second inequality in \eqref{eq:BRV:gRelBound} follows from  Proposition~\ref{prop:BRV:gamma12}, so it remains to to prove the first inequality in \eqref{eq:BRV:gRelBound}. Denoting $\Lambda = \Lambda(g,2\gamma)$, let us suppose for the sake of contradiction that there exists $\delta>0$ such that
\begin{equation} \label{eq:gRelbound:deltachoice}
\sup\set{\frac{g(x)}{g(y)} : x,y \geq M, |x-y| \leq \gamma} > \Lambda + \delta\end{equation}
for every $M$. Fix $0<\varepsilon<\delta$, and choose $k \in \bbN$ large enough that
\begin{equation} \label{eq:gRelboundKchoice}
    (\Lambda-1+\delta)\sum_{j=1}^k (\Lambda+\varepsilon)^{-j} > 1,
\end{equation}
which can clearly be done since the left-hand side of \eqref{eq:gRelboundKchoice}  converges to $$ \frac{\Lambda-1+\delta}{\Lambda-1+\varepsilon} >1. $$

Now, choose $M$ large enough that $M>k\gamma$ and
\begin{equation} \label{eq:gRelbound:epschoice} \Lambda(g,2\gamma,M-k\gamma) < \Lambda+\varepsilon.\end{equation} 
By \eqref{eq:gRelbound:deltachoice} and monotonicity of $g$, we may find $x \geq M$ for which $g(x+\gamma)/g(x) >\Lambda+\delta$. Naturally, this yields
\begin{equation}
    \frac{g(x+\gamma)-g(x)}{\gamma}
    > \frac{\Lambda-1+\delta}{\gamma}g(x) .
\end{equation}
Inductively applying \eqref{eq:gRelbound:epschoice} together with Lemma~\ref{lem:gratioToxRatioViaRV}, we observe that
\begin{equation}
    \frac{g(x-(j-1)\gamma ) - g(x-j\gamma)}{\gamma} 
    > (\Lambda+\varepsilon)^{-j} \frac{\Lambda - 1 + \delta}{\gamma} g(x)
\end{equation}
for all $j = 0,1,2,\ldots,k$. Thus,
\begin{align*}
    g(x-k\gamma)
    & = g(x) - \gamma\sum_{j=1}^k \frac{g(x-(j-1)\gamma) - g(x-j\gamma)}{\gamma} \\
    & < g(x) - \gamma \sum_{j=1}^k(\Lambda+\varepsilon)^{-j} \frac{\Lambda - 1+ \delta}{\gamma} g(x) \\
    & = g(x) \left( 1 - (\Lambda - 1 + \delta) \sum_{j=1}^k(\Lambda+\varepsilon)^{-j} \right) \\
    & < 0,
\end{align*}
which is a contradiction.
\end{proof}

\begin{proposition} \label{prop:BRV:gh}
Suppose $g,h:\bbR_+ \to \bbR_+$ are admissible. For all $\gamma > 0$, one has
\begin{align}
\label{eq:BRV:M(gh)2}
    \RV(gh,\gamma) &\leq \Lambda(g,\gamma)\Lambda(h,\gamma) \max[\Lambda(g,\gamma)\Lambda(h,\gamma)].
\end{align}
\end{proposition}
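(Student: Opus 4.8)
The plan is to bound the Dini derivatives of the product $gh$ by a Leibniz-type inequality and then reduce the ratio estimate defining $\RV(gh,\gamma)$ to the separate estimates for $g$ and $h$, supplemented by the value-ratio bound of Proposition~\ref{prop:BRV:gRelBound}. First I would dispose of the trivial case: if $\Lambda(g,\gamma)=\infty$ or $\Lambda(h,\gamma)=\infty$, the right-hand side of \eqref{eq:BRV:M(gh)2} is infinite and there is nothing to prove, so I may assume both are finite. Under this assumption $g$ and $h$ are eventually locally Lipschitz and strictly increasing, and from strict monotonicity together with Lemma~\ref{lem:poorMansMVT} one checks that $\rmD^-(g,y),\rmD^-(h,y)\in(0,\infty)$ and $\rmD^+(g,x),\rmD^+(h,x)\in(0,\infty)$ once $x,y$ are large; also $g(x),h(x)>0$ for $x>0$, and $gh$ is again admissible.

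Next I would establish the key elementary inequalities. Writing $(gh)(z)-(gh)(x)=g(z)\big(h(z)-h(x)\big)+h(x)\big(g(z)-g(x)\big)$, dividing by $z-x$, and using continuity of $g$ together with $\limsup(X+Y)\leq\limsup X+\limsup Y$ (the difference quotients being nonnegative by monotonicity), one obtains $\rmD^+(gh,x)\leq g(x)\rmD^+(h,x)+h(x)\rmD^+(g,x)$; the same computation with $\liminf$ and $\liminf(X+Y)\geq\liminf X+\liminf Y$ gives $\rmD^-(gh,y)\geq g(y)\rmD^-(h,y)+h(y)\rmD^-(g,y)$. I expect this to be the main obstacle, precisely because one must check that the cross terms separate correctly under $\limsup$/$\liminf$; this is where continuity of $g,h$ and the sign of the difference quotients get used. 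Everything afterwards is bookkeeping with the definitions.

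Finally, fix $M$ large and $x,y\geq M$ with $|x-y|\leq\gamma$. By definition of $\Lambda(g,\gamma,M)$ and $\Lambda(h,\gamma,M)$ we have $\rmD^+(g,x)\leq\Lambda(g,\gamma,M)\rmD^-(g,y)$ and $\rmD^+(h,x)\leq\Lambda(h,\gamma,M)\rmD^-(h,y)$; writing $P_g(M)=\sup\{g(x)/g(y):x,y\geq M,\ |x-y|\leq\gamma\}$ and similarly $P_h(M)$, we have $g(x)\leq P_g(M)g(y)$ and $h(x)\leq P_h(M)h(y)$. Plugging these into the Leibniz-type bounds and applying the mediant inequality $\frac{\alpha_1+\alpha_2}{\beta_1+\beta_2}\leq\max(\alpha_1/\beta_1,\alpha_2/\beta_2)$ (valid since $\beta_1=g(y)\rmD^-(h,y)>0$ and $\beta_2=h(y)\rmD^-(g,y)>0$ for $y$ large) yields
\[ \frac{\rmD^+(gh,x)}{\rmD^-(gh,y)}\leq\max\big(P_g(M)\Lambda(h,\gamma,M),\ P_h(M)\Lambda(g,\gamma,M)\big). \]
Taking the supremum over such $x,y$ and letting $M\to\infty$, then invoking Proposition~\ref{prop:BRV:gRelBound} (so $\lim_{M\to\infty}P_g(M)\leq\Lambda(g,2\gamma)\leq\Lambda(g,\gamma)^2$, and likewise for $h$), I conclude
\[ \RV(gh,\gamma)\leq\max\big(\Lambda(g,\gamma)^2\Lambda(h,\gamma),\ \Lambda(h,\gamma)^2\Lambda(g,\gamma)\big)=\Lambda(g,\gamma)\Lambda(h,\gamma)\max\big(\Lambda(g,\gamma),\Lambda(h,\gamma)\big), \]
which is \eqref{eq:BRV:M(gh)2}.
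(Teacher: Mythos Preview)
Your proposal is correct and follows essentially the same approach as the paper: both establish the Leibniz-type inequalities $\rmD^+(gh,x)\leq g(x)\rmD^+(h,x)+h(x)\rmD^+(g,x)$ and $\rmD^-(gh,y)\geq g(y)\rmD^-(h,y)+h(y)\rmD^-(g,y)$, invoke Proposition~\ref{prop:BRV:gRelBound} for the value ratios, and finish with the mediant inequality. The only cosmetic differences are that you work directly with the $M$-dependent quantities $\Lambda(\cdot,\gamma,M)$ and $P_{\bullet}(M)$ and pass to the limit, whereas the paper introduces an auxiliary $\varepsilon>0$; you also explicitly dispose of the case $\Lambda(g,\gamma)=\infty$ or $\Lambda(h,\gamma)=\infty$, which the paper leaves implicit.
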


\begin{proof}First, note that
\begin{align}
\nonumber
    \rmD^+(gh,x) 
    & = \limsup_{z \to x} \frac{g(z)h(z)-g(x)h(x)}{z-x} \\
    \nonumber
    & \leq \limsup_{z \to x} \frac{g(z)h(z)-g(z)h(x)}{z-x} + \limsup_{z \to x} \frac{g(z)h(x)-g(x)h(x)}{z-x} \\
    \label{eq:Lambda+ghprodrule}
    & = g(x)\rmD^+(h,x)+h(x)\rmD^+(g,x).
\end{align}
Similarly,
\begin{equation} \label{eq:Lambda-ghprodrule}\rmD^-(gh,x) \geq g(x)\rmD^-(h,x) + h(x) \rmD^-(g,x).\end{equation}
Given $\varepsilon>0$, choose $M$ large enough that
\begin{equation} \label{eq:LambdaghMchoice1} 
\Lambda(g,\gamma,M) < \Lambda(g,\gamma)+\varepsilon, \quad \Lambda(h,\gamma,M) < \Lambda(h,\gamma)+\varepsilon
\end{equation}
and use Proposition~\ref{prop:BRV:gRelBound} to ensure that $M$ is also large enough that
\begin{equation}\label{eq:LambdaghMchoice2} 
    g(x) \leq g(y)[\Lambda(g,\gamma)^2 + \varepsilon], \quad 
    h(x) \leq h(y)[\Lambda(h,\gamma)^2 + \varepsilon], \quad
    \forall x,y \geq M, \ |x-y| \leq \gamma.
\end{equation}
 
Let $x, y \geq M$ with $|x-y| \leq \gamma$ be given. Putting together \eqref{eq:Lambda+ghprodrule}, \eqref{eq:Lambda-ghprodrule}, \eqref{eq:LambdaghMchoice1}, and \eqref{eq:LambdaghMchoice2}
\begin{align*}
      & \, \frac{\rmD^+(gh,x)}{\rmD^-(gh,y)} \\
      \leq & \, \frac{g(x)\rmD^+(h,x) + h(x)\rmD^+(g,x)}{g(y)\rmD^-(h,y) + h(y)\rmD^-(g,y)} \\
     \leq & \,\frac{[\Lambda(g,\gamma)^2+\varepsilon](\Lambda(h,\gamma)+\varepsilon)g(y)\rmD^-(h,y) + (\Lambda(g,\gamma)+\varepsilon)[\Lambda(h,\gamma)^2 + \varepsilon] h(y)\rmD^-(g,y)}{g(y)\rmD^-(h,y) + h(y)\rmD^-(g,y)} \\
     \leq & \, \max\left([\Lambda(g,\gamma)^2+\varepsilon](\Lambda(h,\gamma)+\varepsilon) ,  (\Lambda(g,\gamma)+\varepsilon)[\Lambda(h,\gamma)^2 + \varepsilon] \right).
\end{align*}
Sending $M \to \infty$ and $\varepsilon\downarrow 0$ gives the desired result.
\end{proof}

\begin{proposition}

The sets $\TRV$ and $\BRV$ are closed under finite sums, products, and scaling by positive constants. 
\end{proposition}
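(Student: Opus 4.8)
The plan is to deduce all of these closure properties directly from the estimates proved earlier in this section, so that the proof amounts to bookkeeping once we verify that admissibility itself survives the three operations.

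\textbf{Step 1: admissibility is preserved.} First I would check that if $g,h:\bbR_+\to\bbR_+$ are admissible and $c>0$, then $g+h$, $gh$, and $cg$ all map $\bbR_+$ into itself, are continuous, and are strictly increasing (a sum or positive scalar multiple of a strictly increasing function is strictly increasing, and a product of two strictly increasing nonnegative functions is strictly increasing). Each also tends to $+\infty$: this is immediate for $g+h$ and $cg$, while for $gh$ one uses that $h$ is eventually $\geq 1$ (being increasing with $h\to\infty$), so that $g(x)h(x)\geq g(x)\to\infty$. Thus in every case the only remaining task is to control $\Lambda(\cdot,1)$.

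\textbf{Step 2: the two-term cases.} Proposition~\ref{prop:BRV:g+h} gives $\Lambda(g+h,1)\leq\max(\Lambda(g,1),\Lambda(h,1))$, and Proposition~\ref{prop:BRV:gh} bounds $\Lambda(gh,1)$ by an expression in $\Lambda(g,1)$ and $\Lambda(h,1)$ that is finite whenever both are finite and equals $1$ whenever both equal $1$. Hence $g,h\in\BRV$ implies $g+h,gh\in\BRV$; and $g,h\in\TRV$ implies $\Lambda(g+h,1),\Lambda(gh,1)\leq 1$, which forces equality since $\Lambda(\cdot,1)\geq 1$ always, so $g+h,gh\in\TRV$. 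For scaling by $c>0$ one observes $\rmD^\pm(cg,x)=c\,\rmD^\pm(g,x)$, so the ratios defining $\Lambda$ are literally unchanged: $\Lambda(cg,\gamma,M)=\Lambda(g,\gamma,M)$ for every $\gamma$ and $M$, whence $\Lambda(cg,1)=\Lambda(g,1)$ and $cg$ lies in $\BRV$ (resp.\ $\TRV$) precisely when $g$ does.

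\textbf{Step 3: induction to finitely many terms.} Since $g_1+\cdots+g_{n+1}=(g_1+\cdots+g_n)+g_{n+1}$, and likewise for products, a one-line induction on the number of terms (using Step 1 to keep everything admissible and Step 2 at each stage) upgrades the binary statements to closure under arbitrary finite sums and products.

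I do not anticipate a real obstacle here; everything is a consequence of Propositions~\ref{prop:BRV:g+h}, \ref{prop:BRV:gh}, and the linearity of the upper and lower derivatives. The only points needing a modicum of care are (i) checking that the \emph{product} of two admissible functions still diverges to infinity (Step 1), and (ii) remembering that it is legitimate to test membership in $\BRV$ and $\TRV$ with the parameter $\gamma=1$ — this is exactly Proposition~\ref{prop:BRVnotgammadep}, which tells us these classes are independent of $\gamma$, so the $\gamma=1$ versions of the bounds above are all we need.
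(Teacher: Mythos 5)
Your proof is correct and takes essentially the same route as the paper, which simply cites the bounds of Propositions~\ref{prop:BRV:g+h} and \ref{prop:BRV:gh}; your additional verifications (that admissibility itself is preserved, that scaling leaves $\Lambda$ unchanged, and the induction to finitely many terms) are exactly the routine details the paper leaves implicit.
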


\begin{proof}
This follows immediately from the bounds in Propositions~\ref{prop:BRV:g+h} and \ref{prop:BRV:gh}.
\end{proof}

Let us now give some examples of functions with bounded relative variation.

\begin{proposition}\mbox{\,}
 \begin{enumerate}[{\rm(a)}]
     \item For any $m >0$, $g(x)=x^m$ is an admissible $\TRV$ function.
     \item For any $a,b > 0$, $g(x) = e^{ax^b}$ is in $\BRV$ if and only if $b \leq 1$ and in $\TRV$ if and only if $b <1$
 \end{enumerate}
\end{proposition}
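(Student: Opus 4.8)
The plan is to verify each example directly from the definition of $\Lambda(g,\gamma)$, exploiting the fact (Proposition~\ref{prop:BRVnotgammadep}) that membership in $\TRV$ or $\BRV$ can be tested with any single convenient value of $\gamma$, and exploiting that the relevant functions are everywhere differentiable, so that $\rmD^\pm(g,x) = g'(x)$ and $\Lambda(g,\gamma,M)$ is simply $\sup\{g'(x)/g'(y) : x,y\ge M,\ |x-y|\le\gamma\}$. For (a), with $g(x)=x^m$ one has $g'(x)=mx^{m-1}$, so for $x,y\ge M$ with $|x-y|\le\gamma$ the ratio $g'(x)/g'(y) = (x/y)^{m-1}$. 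If $m\ge 1$ this is at most $((y+\gamma)/y)^{m-1} = (1+\gamma/y)^{m-1} \to 1$ as $y\to\infty$; if $0<m<1$ the same bound applies to the reciprocal ratio, and in either case $\Lambda(x^m,\gamma,M)\to 1$ as $M\to\infty$, so $\Lambda(x^m,\gamma)=1$ and $g\in\TRV$. (One should also note $x^m$ is admissible: continuous, strictly increasing on $\bbR_+$, and tending to $\infty$.)

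For (b), write $g(x)=e^{ax^b}$, which is admissible for all $a,b>0$, with $g'(x) = ab x^{b-1} e^{ax^b}$. For $x,y\ge M$, $|x-y|\le\gamma$,
\[
\frac{g'(x)}{g'(y)} = \left(\frac{x}{y}\right)^{b-1} e^{a(x^b - y^b)}.
\]
The polynomial prefactor tends to $1$ as before, so the behavior of $\Lambda$ is governed by $\sup e^{a(x^b-y^b)}$ over the relevant $x,y$. When $b<1$, the function $t\mapsto t^b$ has derivative $bt^{b-1}\to 0$, so by the mean value theorem $|x^b-y^b|\le b M^{b-1}\gamma \to 0$ as $M\to\infty$; hence $\Lambda(g,\gamma,M)\to 1$ and $g\in\TRV$ (and a fortiori $g\in\BRV$). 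When $b=1$, $g(x)=e^{ax}$ and $g'(x)/g'(y) = e^{a(x-y)}$, so $\Lambda(g,\gamma,M) = e^{a\gamma}$ for every $M$, whence $\Lambda(g,\gamma)=e^{a\gamma}\in(1,\infty)$: thus $g\in\BRV\setminus\TRV$. When $b>1$, $t\mapsto t^b$ has derivative tending to $\infty$, so taking $x = y+\gamma$ gives $x^b - y^b \ge b y^{b-1}\gamma \to\infty$ as $y\to\infty$; therefore $\Lambda(g,\gamma,M)=\infty$ for every $M$, so $\Lambda(g,\gamma)=\infty$ and $g\notin\BRV$. Combining the three cases gives exactly the claimed equivalences: $g\in\BRV \iff b\le 1$ and $g\in\TRV\iff b<1$.

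I do not expect any serious obstacle here; the only mild subtlety is bookkeeping the two directions (for non-membership one needs a lower bound on $\Lambda$, achieved by the explicit choice $x=y+\gamma$ and letting $y\to\infty$, whereas for membership one needs a uniform upper bound that decays to $1$), and making sure admissibility is checked. If one preferred to avoid differentiating, one could instead estimate $g(z)-g(x)$ directly and use Lemma~\ref{lem:poorMansMVT} to bound $\rmD^\pm$, but since all functions in this proposition are $C^1$ it is cleanest to just compute $g'$ and take ratios.
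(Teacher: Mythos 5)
Your proposal is correct and follows essentially the same route as the paper: exploit differentiability so that $\rmD^\pm(g,x)=g'(x)$, compute the ratio $g'(x)/g'(y)$ explicitly, and show it tends to $1$ (for $\TRV$), stays bounded (for $\BRV$ when $b=1$), or blows up along $x=y+\gamma$ (for $b>1$). The only difference is cosmetic — you invoke the mean value theorem for $t\mapsto t^b$ where the paper bounds $e^{a((y+\gamma)^b-y^b)}$ directly — and your treatment of the non-membership directions is, if anything, slightly more explicit.
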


\begin{proof}
Since these functions are differentiable, $\rmD^+(g,x) = \rmD^-(g,x) = g'(x)$, which we use throughout the proof.
\medskip

(a) Denote $g(x) = x^m$. If $x,y \geq M$, $|x-y|\leq \gamma$, and $m \geq 1$ then
\begin{align*}
    \frac{\rmD^+(g,x)}{\rmD^-(g,y)}
     = \frac{g'(x)}{g'(y)} 
     = \frac{x^{m-1}}{y^{m-1}} 
     \leq \left( \frac{y+\gamma}{y}\right)^{m-1} 
     \leq \left(1+\gamma M^{-1}\right)^{m-1},
\end{align*}
which converges to one as $M \to \infty$. A similar argument works when $0<m<1$, but one must bound things differently since $t\mapsto t^{m-1}$ is decreasing for $m<1$.
\medskip

(b) Notice that $g'(x) = abx^{b-1}e^{ax^b}$. If $x,y \geq M$, $|x-y| \leq \gamma$, and $0 < b \leq 1$, we get
\begin{align*}
    \frac{\rmD^+(g,x)}{\rmD^-(g,y)}
     = \frac{g'(x)}{g'(y)} 
     = \frac{x^{b-1}}{y^{b-1}}e^{a(x^b-y^b)} 
     \leq (1-\gamma M^{-1})^{b-1}e^{a((y+\gamma)^b-y^b)}.
\end{align*}
The right-hand side converges to one as $M \to \infty$ if $b<1$ and converges to a finite value if $b=1$. On the other hand, if $b=1$, then one can check that
\begin{align*}
    \frac{g'(x+\gamma)}{g'(x)} = 
     e^{a\gamma} > 1,
\end{align*}
for all $x$. This shows that $g \in \BRV \setminus \TRV$. Similar calculations show that $g \notin \BRV$ whenever $b>1$.
\end{proof}

Let us briefly note that every $\BRV$ function is exponentially bounded:

\begin{corollary} \label{coro:BRVexpbounded}
If $g \in \BRV$, then $g$ is exponentially bounded, that is, $g(x) \leq Ae^{Bx}$ for constants $A,B>0$.
\end{corollary}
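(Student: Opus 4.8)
The plan is to extract from Proposition~\ref{prop:BRV:gRelBound} a one-step multiplicative bound and then iterate it. Since $g \in \BRV$, Proposition~\ref{prop:BRVnotgammadep} gives $\Lambda(g,2) < \infty$, so applying Proposition~\ref{prop:BRV:gRelBound} with $\gamma = 1$ and using that the relevant suprema are nonincreasing in $M$, we may fix $M_0 \geq 0$ and a finite constant $C \geq 1$ with
\[
g(x) \leq C\, g(y) \qquad \text{whenever } x,y \geq M_0 \text{ and } |x-y|\leq 1 .
\]
In particular, $g(x) \leq C\, g(x-1)$ for every $x \geq M_0 + 1$.

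Next I would iterate this. Given $x \geq M_0+1$, set $n = \lfloor x - M_0 - 1\rfloor$, so that $x - n \in [M_0+1, M_0+2)$ and each of the points $x, x-1, \dots, x-n$ lies in $[M_0,\infty)$. Applying the one-step bound $n$ times and then invoking monotonicity of $g$ yields
\[
g(x) \leq C^{\,n}\, g(x-n) \leq C^{\,n}\, g(M_0+2) .
\]
Since $C \geq 1$ and $n \leq x$, this gives $g(x) \leq g(M_0+2)\, e^{x\log C}$ for all $x \geq M_0 + 1$. For the remaining range $0 \leq x \leq M_0 + 1$, monotonicity gives $g(x) \leq g(M_0+1) \leq g(M_0+2)$. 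Combining the two cases and setting $A = g(M_0+2)$ and $B = \log C + 1$ (the $+1$ only to guarantee strict positivity in the degenerate case $C = 1$) produces $g(x) \leq A\,e^{Bx}$ on all of $\bbR_+$, with $A, B > 0$.

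There is no substantial obstacle here; it is essentially a short iteration once Proposition~\ref{prop:BRV:gRelBound} is in hand. The only points requiring a little care are checking that all of the intermediate points $x-j$ stay within the region $[M_0,\infty)$ on which the estimate from Proposition~\ref{prop:BRV:gRelBound} is valid, and confirming that the final constants are genuinely positive --- the latter using that $g$ is strictly increasing and $\bbR_+$-valued, which forces $g(M_0+2) > 0$.
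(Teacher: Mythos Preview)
Your proof is correct and follows essentially the same approach as the paper: both invoke Proposition~\ref{prop:BRV:gRelBound} to obtain a one-step multiplicative bound $g(x+\gamma)\le C\,g(x)$ for large $x$, iterate it, and finish by monotonicity. Your version is more explicit about the constants and about handling the bounded initial segment, but the argument is the same.
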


\begin{proof}
Proposition~\ref{prop:BRV:gRelBound} implies that for some $\varepsilon > 0$, some large $x$ and all $n \in \bbN$,
\begin{equation}
    g(x + n\gamma) \leq g(x)(\Lambda(g,\gamma)^2+\varepsilon)^n.
\end{equation}
The result follows by monotonicity.
\end{proof}

The converse of Corollary~\ref{coro:BRVexpbounded} fails: for any increasing function $h$, one can find an admissible function in the complement of $\BRV$ that is dominated by $h$.

\begin{example}
For any continuous increasing function $h:\bbR_+ \to \bbR_+$ such that $h(x)\to\infty $ as $x \to \infty$, there is an admissible function $g$ such that $g(x) \leq h(x)$ for all sufficiently large $x$ and $g \notin \BRV$.

To see this, 
choose $0 = x_0 < x_1 < \cdots$ so that $h(x_n) = n$ (and hence $h(x) \geq n$ for $x \geq x_n$). Pick $0=y_0 < y_1 < \cdots$ such that
\begin{align}
\label{eq:slowgrowNotBRV1}    y_n & \geq x_n \\
\label{eq:slowgrowNotBRV2}    y_{n+1}-y_n & \geq n(y_n - y_{n-1}) \quad \forall n \in \bbN.
\end{align}
Define $g$ to be continuous and piecewise affine with $g(y_n)=n/2$ for each $n \in \bbZ_+$. Notice that $g$ is admissible, that the definition of $g$ and \eqref{eq:slowgrowNotBRV1}  ensure $g(x) \leq h(x)$ for $x \geq x_1$, an that \eqref{eq:slowgrowNotBRV2} ensures that $g \notin \BRV$.

One can mollify this example to produce a smooth admissible $g \in \BRV$ having similar properties.
\end{example}

\section{Proofs of Main Theorems} \label{sec:BSC}

Let us prove Theorems~\ref{t:AbstracBSC} and \ref{t:AbstracBSCgen}. Clearly the latter implies the former, so we focus on proving Theorem~\ref{t:AbstracBSCgen}. We follow the strategy from \cite{DFG2021JFA}.

\begin{lemma} \label{lem:thicknessUnderTransformation}
Suppose $g \in \BRV$, $K \subseteq [M,\infty)$ is compact, and $\diam(K) \leq \gamma$. Then,
\begin{equation}
    \tau(g[K]) \geq [\Lambda(g,\gamma,M)]^{-1}\tau(K).
\end{equation}
\end{lemma}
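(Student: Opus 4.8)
The plan is to compare the thickness of $g[K]$ to that of $K$ directly through the defining formula for thickness, exploiting that $g$ maps gaps of $K$ to gaps of $g[K]$ bijectively and preserves the nesting structure of the complementary intervals. Fix a presentation $\mathcal{U} = \{U_n\}$ of $K$ that is (nearly) optimal, i.e., with $\inf_u \tau(K,\mathcal{U},u)$ close to $\tau(K)$; one may take the gaps ordered by nonincreasing length, as noted after the definition of thickness. Since $g$ is continuous and strictly increasing, $g$ induces a bijection between the gaps of $K$ and the gaps of $g[K]$: if $U = (p,q)$ is a gap of $K$, then $g[U] = (g(p),g(q))$ is a gap of $g[K]$, and $g$ carries the convex hull of $K$ to the convex hull of $g[K]$. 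Thus $\mathcal{U}' = \{g[U_n]\}$ is a presentation of $g[K]$, and the bridge $B$ in $K$ at a boundary point $u$ (the component of $I \setminus (U_1 \cup \cdots \cup U_n)$ containing $u$) maps exactly to the corresponding bridge $B' = g[B]$ in $g[K]$ at $g(u)$.

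The key step is then the estimate on ratios of lengths. Writing $B = [s,t]$ and $U = (t,t')$ (the case where $U$ lies to the left of $B$ is symmetric), we have $|B|/|U| = (t-s)/(t'-t)$ and $|B'|/|U'| = (g(t)-g(s))/(g(t')-g(t))$. Since $B \cup U \subseteq K$'s convex hull $\subseteq [M,\infty)$ and $\diam(B \cup U) \leq \diam(K) \leq \gamma$, Lemma~\ref{lem:gratioToxRatioViaRV} applied with the intervals $[s,t]$ and $[t,t']$ gives
\[
\frac{g(t)-g(s)}{g(t')-g(t)} \geq [\Lambda(g,\gamma,M)]^{-1}\,\frac{t-s}{t'-t},
\]
that is, $\tau(g[K],\mathcal{U}',g(u)) \geq [\Lambda(g,\gamma,M)]^{-1}\,\tau(K,\mathcal{U},u)$. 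Taking the infimum over all boundary points $u$ and then the supremum over presentations (or over the near-optimal $\mathcal{U}$ and passing to the limit) yields $\tau(g[K]) \geq [\Lambda(g,\gamma,M)]^{-1}\tau(K)$.

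The main obstacle, such as it is, is bookkeeping rather than a genuine difficulty: one must verify carefully that $g$ really does send a presentation of $K$ to a presentation of $g[K]$ with the bridge-over-gap structure intact — in particular that the ordering of gaps by length need not be preserved by $g$, so one should either argue that $\Lambda(g,\gamma,M)$-distortion of lengths changes the order only in a controlled way, or (cleaner) observe that the thickness is the supremum over \emph{all} presentations, so it suffices to exhibit \emph{one} good presentation of $g[K]$, namely $\mathcal{U}'$ obtained by transporting a near-optimal $\mathcal{U}$ for $K$, without worrying whether $\mathcal{U}'$ is itself length-ordered. A secondary point to handle with care is the $\limsup$/$\liminf$ in the definition of $\Lambda$: Lemma~\ref{lem:gratioToxRatioViaRV} already packages this correctly, so invoking it is the right move rather than reproving a mean value estimate.
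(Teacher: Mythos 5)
Your proposal is correct and follows essentially the same route as the paper's proof: transport a near-optimal presentation of $K$ through $g$, apply Lemma~\ref{lem:gratioToxRatioViaRV} to each bridge/gap pair, and use that thickness is a supremum over presentations so the transported presentation need not be length-ordered. The only cosmetic difference is that the paper also disposes explicitly of the trivial case $\tau(K)=\infty$ (where $K$ and $g[K]$ are intervals), which your write-up omits but which costs one sentence.
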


\begin{proof}
If $\tau(K) = \infty$, then $K$ and $g[K]$ are both intervals, and there is nothing to do, so assume $\tau:=\tau(K)<\infty$, and write $I = [\min K, \max K]$. Given $\varepsilon >0$, choose a presentation $\mathcal{U} = \{U_n\}$ such that $\tau(K,\mathcal{U}) > \tau - \varepsilon$. 

For any two intervals $B=[u,v]$, $U=[v, w]$ from $I$ such that
$$
\frac{|B|}{|U|}=\frac{v-u}{w-v} \ge \tau - \varepsilon,
$$
we have by Lemma~\ref{lem:gratioToxRatioViaRV}
\begin{align*}
\frac{|g(B)|}{|g(U)|} 
 = \frac{g(v)-g(u)}{g(w)-g(v)} 
  \geq [\RV(g,\gamma,M)]^{-1}\frac{v-u}{w-v}
 \geq [\RV(g,\gamma, M)]^{-1}(\tau - \varepsilon).
\end{align*}
 A similar estimate holds for intervals situated in the other order, that is, $B=[u,v]$, $U=[w, u]$. Therefore, $g\mathcal{U} = \{g[U_n]\}$ is a presentation of $g[K]$ satisfying $\tau(g[K],g\mathcal{U}) \geq  [\RV(g,\gamma, M)]^{-1}(\tau - \varepsilon)$. Thus,
 \begin{equation}
     \tau(g[K]) \geq  [\RV(g,\gamma, M)]^{-1}(\tau - \varepsilon).
 \end{equation}
 Since this holds for arbitary $\varepsilon>0$, the lemma follows.
\end{proof}

\begin{lemma} \label{lem:thicknessLongestgap}
If $K \subseteq \bbR$ is compact and $\tau(K) > \beta$, then the longest gap length of $K$ satisfies
\begin{equation}
    \gamma(K) \leq \frac{\diam(K)}{1+2\beta}.
\end{equation}
\end{lemma}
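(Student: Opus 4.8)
The plan is to unwind the definition of thickness at the two boundary points of the longest gap. First I would dispose of the trivial case: if $K$ is an interval it has no gaps and there is nothing to prove, so assume $K$ has at least one gap and write $I=[\min K,\max K]$, so that $|I|=\diam(K)$.

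Since $\tau(K)>\beta$, by definition of the supremum there is a presentation $\mathcal{U}=\{U_n\}_{n\ge 1}$ of $K$ with $\inf_u \tau(K,\mathcal{U},u)>\beta$; invoking the fact quoted from \cite{PalisTakens1993}, I may (and will) take $\mathcal{U}$ to list the gaps in order of nonincreasing length, so that $U_1$ is a longest gap and $|U_1|=\gamma(K)$. Let $p<q$ be the two endpoints of $U_1$; both lie in $K$, and since $U_1$ is a bounded component of $I\setminus K$ one has $\min K<p$ and $q<\max K$, so the two components of $I\setminus U_1$ are the nondegenerate intervals $[\min K,p]$ and $[q,\max K]$.

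Now apply the definition of $\tau(K,\mathcal{U},\cdot)$ at $u=p$ and $u=q$. Because $U_1$ is the first gap in the presentation, the bridge $B$ attached to a boundary point $u$ of $U_1$ is exactly the component of $I\setminus U_1$ containing $u$; hence $\tau(K,\mathcal{U},p)=(p-\min K)/\gamma(K)$ and $\tau(K,\mathcal{U},q)=(\max K-q)/\gamma(K)$. Each of these exceeds $\beta$, so $p-\min K>\beta\gamma(K)$ and $\max K-q>\beta\gamma(K)$. Adding the three pieces of $I$ then gives
\[
\diam(K)=(p-\min K)+(q-p)+(\max K-q)>\beta\gamma(K)+\gamma(K)+\beta\gamma(K)=(1+2\beta)\gamma(K),
\]
which yields the claimed bound (in fact with strict inequality).

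The argument is essentially a definition-chase, so I do not anticipate a genuine obstacle; the one point requiring a moment's care is the identification of the bridge $B$ with the full component of $I\setminus U_1$, which is where the ordering of the presentation by gap length is used. If one prefers not to invoke that ordering, the same conclusion follows from any presentation with $\inf_u\tau>\beta$: for the longest gap $U^{*}=U_m$, the bridge $B$ at a boundary point $u$ of $U^{*}$ is contained in the component of $I\setminus U^{*}$ containing $u$, so that component still has length $>\beta\gamma(K)$, and the summation goes through verbatim.
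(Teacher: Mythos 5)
Your proof is correct and uses essentially the same idea as the paper: both components of $I\setminus U$ flanking the longest gap $U$ must be long relative to $|U|$, and summing the three pieces of $I$ gives the bound. The paper runs the same computation in contrapositive form (if the gap were too long, one flanking component would be too short, forcing $\tau(K,\mathcal{U},u)\le\beta$ at one endpoint for every presentation), whereas you argue directly from a near-optimal presentation; your closing remark correctly notes that the ordering of the presentation is dispensable since the bridge is always contained in the relevant component.
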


\begin{proof}
Write $I$ for the convex hull of $K$ so that $\diam(K) = |I|$. If the longest gap $U$ of $K$ satisfies $|U| > \frac{|I|}{1+2\beta}$, $I \setminus U$ has two components, one of which must have length no larger than $$\frac{|I|-|U|}{2} < \frac{1- \frac{1}{1+2\beta}}{2}|I| = \frac{\beta}{1+2\beta} |I|.$$
Thus, for (at least) one endpoint $u$ of $U$, one has
\[\tau(K,\mathcal{U},u) \leq \frac{\frac{\beta}{1+2\beta} |I|}{\frac{1}{1+2\beta}|I|} = \beta\]
for every presentation $\mathcal{U}$ of $K$, leading to $\tau(K) \leq \beta$. The result follows.
\end{proof}

\begin{proof}[Proof of Theorem~\ref{t:AbstracBSCgen}]
Assume $g \in \BRV$, $F\subseteq \bbR_+$ is $(A,a,1+\varepsilon)$-thick, and let $\{K_n\}_{n=0}^\infty$ be an ordered fragmentation of $F$ satisfying \eqref{eq:AatThick1}, \eqref{eq:AatThick2}, and \eqref{eq:AatThick3}. Denote $\widetilde K_n=g(K_n)$, let $I_n$ denote the convex hull of $K_n$, and assume 
\begin{equation} \label{eq:BSCGenLambdachoice}
    \Lambda(g,A) < \min\left\{ \sqrt{\frac{1+\varepsilon}{1+\frac{\varepsilon}{2}}}, \sqrt[5]{\frac{3}{2}}, \sqrt[3]{\frac{A}{a}} \right\}.
\end{equation}
We may also choose $M$ large enough that $\Lambda(g,A,M)$ is strictly less than the right-hand side of \eqref{eq:BSCGenLambdachoice} as well.

By Lemma~\ref{lem:thicknessUnderTransformation} and \eqref{eq:BSCGenLambdachoice}, if $n$ is large enough that $K_n \subseteq [M,\infty)$, one has
\begin{align*} 
\tau(\widetilde K_n)\ge [\Lambda(g,2A,M)]^{-1}(1+\varepsilon)
 & \geq [\Lambda(g,A,M)]^{-2}(1+\varepsilon) \\
 & > \left[ \frac{1+\varepsilon}{1+\frac{\varepsilon}{2}} \right]^{-1}(1+\varepsilon) \\
 & = 1+\frac{\varepsilon}{2},
\end{align*}
and thus $\tau(\widetilde K_n) > 1+\frac{\varepsilon}{2}$ for all sufficiently large $n$. Consequently, the sum $\widetilde K_n+\widetilde K_{n}$ is an interval by Remark~\ref{r.cplusc}.

Next, let us show that for all sufficiently large $n$, each of the sets $\widetilde K_n$ and  $\widetilde K_{n+1}$ has diameter larger than the largest gap of the other. In that case Lemma~\ref{l.sumofcs} will imply that $\widetilde K_n+\widetilde K_{n+1}$ is an interval. Write $I_n=[x_n,y_n]$ for each $n$. By our assumptions, we have
\begin{equation} \label{eq:ABSC:basicIneqs}
A \le y_n-x_n \le 2A, \quad x_{n+1}-y_n \le a
\end{equation}
for every $n$.
Since we already know that $\tau(\widetilde K_{n+1})>1$, Lemma~\ref{lem:thicknessLongestgap} implies that the largest gap of $\widetilde K_{n+1}$ is not greater than $\frac{1}{3}(g(y_{n+1})-g(x_{n+1}))$, and the diameter of $\widetilde K_n$ is equal to $g(y_n)-g(x_n)$. Now observe
\begin{align*}
    \frac{g(y_n) - g(x_n)}{\frac{1}{3}[g(y_{n+1}) - g(x_{n+1})]}
    & \geq [\RV(g,4A+a,M)]^{-1} \frac{y_n-x_n}{\frac{1}{3}(y_{n+1}-x_{n+1})} \\
    & \geq [\RV(g,A,M)]^{-5} \frac{3A}{2A} \\
    & > \left[ \frac{3}{2} \right]^{-1} \frac{3}{2} \\
    & = 1,
\end{align*}
by \eqref{eq:BSCGenLambdachoice}. Consequently, for all sufficiently large values of $n$, the diameter of $\widetilde K_n$ is greater than the largest gap of $\widetilde K_{n+1}$. Similarly one can show that for all sufficiently large values of $n$, the diameter of $\widetilde K_{n+1}$ is greater than the largest gap of $\widetilde K_{n}$.

Consequently, the sets $J_n := \widetilde K_n + \widetilde K_{n}$ and $J_n' := \widetilde K_n + \widetilde K_{n+1}$ are intervals for large $n$. Let us show that they cover a half line. To conclude, it suffices to verify $J_n \cap J_n'\neq \emptyset$ and $J_n' \cap J_{n+1} \neq \emptyset$.

Recall $I_n = [x_n, y_n]$. It follows from our discussion above that
\begin{align*}
    J_n & = [2g(x_n), 2g(y_n)],\\
    J_{n+1} & = [2g(x_{n+1}), 2g(y_{n+1})],\\
    J_n'    & = [g(x_n)+g(x_{n+1}), g(y_n) + g(y_{n+1})].
\end{align*}
    To show that $J_n$ is not disjoint from $J_n'$ we need to check that $2g(y_n) \geq g(x_n)+g(x_{n+1})$. To that end, note that
    \begin{align*}
        \frac{g(y_n) - g(x_n)}{g(x_{n+1})-g(y_n)}
        & \geq [\RV(g,2A+a,M)]^{-1} \frac{y_n-x_n}{x_{n+1}-y_n} \\
        & \geq [\RV(g,A,M)]^{-3} \frac{A}{a} \\ 
        & > 1,
    \end{align*}
again by \eqref{eq:BSCGenLambdachoice}.

One can show that $J_n'$ is not disjoint from $J_{n+1}$ from an almost identical argument.

Putting everything together, the set
$$
\bigcup_{n \ge n_0} \left( J_n \cup J_n' \right),
$$
is a half line for large enough $n_0$. Since this set is contained in $\tilF + \tilF$, we are done.
\end{proof}

\begin{proof}[Proof of Theorem~\ref{t:AbstracBSC}]
This follows from Theorem~\ref{t:AbstracBSCgen}.
\end{proof}

Let us comment on the assumptions and why they are necessary. Consider first any semibounded closed set $F\subseteq \bbR_+$ with ordered fragmentation $\{K_n\}$. One can clearly choose a smooth, nondecreasing function $f \in C^\infty(\bbR_+)$ that satisfies $f|_{K_n} \equiv n$ for each $n \geq 0$. Clearly then $$\tilF + \tilF = \bbZ_+$$ 
which certainly does not contain a half-line. Of course, this $f$ is clearly not admissible. However, one can certainly perturb about this situation somewhat. Concretely, one can choose $f:\bbR_+ \to \bbR_+$ smooth and increasing with $\tilK_n = f[K_n] \subseteq [n-\varepsilon,n+\varepsilon]$ for $n \in \bbZ_+$. One still sees that $\tilF+\tilF$ is contained in the $2\varepsilon$-neighborhood of $\bbZ_+$. Evidently, the mechanism that produces this is that $f'\sim \varepsilon /A$ on the convex hull of $K_n$ while $f' \sim 1/a$ in between successive $K_n$'s, leading to relative variations of $f'(x) = \rmD^\pm(f,x)$ on the order of $\frac{A}{\varepsilon a}$.
\bigskip

Let us conclude the present section with the proof of Theorem~\ref{t:BSC:bigtau}. Since this is similar to that of Theorem~\ref{t:AbstracBSCgen}, we will only give the main steps.

\begin{proof}[Proof of Theorem~\ref{t:BSC:bigtau}]
Choose $a_0, \tau_0 >0$ such that
\begin{equation} \label{eq:BSCalt:a0tau0choice}
    \tau_0 > R^{7}, 
    \quad a_0 < A/R^3.
\end{equation}
Since $R > 1$, note additionally that $a_0 < A$. Now, assume $\Lambda(g,A) \leq R$ and that $F$ is $(A,a,\tau)$-thick with $0<a\leq a_0$ and $\tau \geq \tau_0$, let $\{K_n\}$ denote an ordered fragmentation of $F$ satisfying \eqref{eq:AatThick1}, \eqref{eq:AatThick2}, and \eqref{eq:AatThick3}, and use the same notation as in the proof of Theorem~\ref{t:AbstracBSCgen}. Using Proposition~\ref{prop:BRV:gamma12}, we note
\begin{equation}
    \Lambda(g,2A) < R^2,
\end{equation}
so following the steps at the beginning of the proof of Theorem~\ref{t:AbstracBSCgen}, we have $\tau(\widetilde K_n) \geq \tau_0/R^2 > R^5 > 1$ for large enough $n$. 

For large enough $n$ that the previous thickness statement holds true, the largest gap of $\widetilde{K}_{n+1}$ is smaller than $\frac{g(y_{n+1})-g(x_{n+1})}{2R^5+1}$ by Lemma~\ref{lem:thicknessLongestgap}, and we have
\begin{align*}
    \frac{g(y_n) - g(x_n)}{\frac{1}{2R^5+1}(g(y_{n+1}) - g(x_{n+1})}
    & \geq R^{-5} \frac{A}{\frac{1}{2R^5+1}2A} \\ 
    & > 1,
\end{align*}
showing that the diameter of $\widetilde K_n$ exceeds the size of the largest gap of $\widetilde{K}_{n+1}$ for large enough $n$ (and vice versa by the same argument). The assumption on $a_0$ ensures that 
$$\Lambda(g,2A+a) \leq \Lambda(g,3A) \leq R^3 < A/a$$
for large enough $n$. These ingredients suffice to apply the arguments of the previous proof and conclude that $\tilF + \tilF$ contains a half-line.
\end{proof}

\section{Examples Not Containing Half-Lines}
\label{sec:nohalfline}

We now turn towards the construction of suitable examples whose sums do not contain half-lines when the assumptions of the main theorems are not met.

\begin{lemma} \label{lem:counterCriterion}
Suppose
\begin{equation}
    F  \subseteq \bigcup_{n=0}^\infty [x_n,y_n]
\end{equation}
where $x_n < y_n < x_{n+1}$  for every $n$, and suppose that for some $N_0$, one has
\begin{equation} \label{eq:counterxample:assmpt1}
    2y_n - x_0 < x_{n+1} \quad \forall n \geq N_0.
\end{equation}
Then $F+F$ does not contain a half-line.
\end{lemma}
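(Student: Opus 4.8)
The plan is to cover $F+F$ by a union of intervals indexed by pairs of fragments, and then exhibit a persistent gap just past each ``doubled'' fragment. First I would record that, since $F \subseteq \bigcup_n [x_n,y_n]$, any element of $F+F$ is of the form $f_1+f_2$ with $f_1\in[x_j,y_j]$ and $f_2\in[x_k,y_k]$ for some $j,k\geq 0$, and hence
\[
F+F \;\subseteq\; \bigcup_{j,k\geq 0}\bigl[\,x_j+x_k,\; y_j+y_k\,\bigr].
\]
I would also note that the sequences $(x_n)$ and $(y_n)$ are strictly increasing, since $x_n<y_n<x_{n+1}$ forces $y_n<y_{n+1}$.

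Next, fix $n\geq N_0$ and consider the open interval $G_n:=(\,2y_n,\; x_0+x_{n+1}\,)$, which is nonempty precisely by hypothesis \eqref{eq:counterxample:assmpt1}. I claim $G_n\cap(F+F)=\emptyset$, which I would verify by showing $G_n$ meets none of the intervals $[x_j+x_k,\,y_j+y_k]$. Split into two cases. If $\max(j,k)\leq n$, then monotonicity of $(y_i)$ gives $y_j+y_k\leq 2y_n$, so the interval lies weakly to the left of $G_n$. If instead $\max(j,k)\geq n+1$, say $k\geq n+1$, then $x_j\geq x_0$ and $x_k\geq x_{n+1}$ give $x_j+x_k\geq x_0+x_{n+1}$, so the interval lies weakly to the right of $G_n$. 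In either case the intersection with the open interval $G_n$ is empty, proving the claim.

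Finally I would argue that the gaps $G_n$ escape to infinity, so that $F+F$ cannot contain any set of the form $[T,\infty)$. It suffices to show $y_n\to\infty$. For $n\geq N_0$ the hypothesis gives $x_{n+1}>2y_n-x_0\geq 2x_n-x_0$, i.e.\ $x_{n+1}-x_0>2(x_n-x_0)$; since $(x_n)$ is strictly increasing we have $x_{N_0+1}-x_0>0$, and iterating this inequality yields $x_n-x_0\to\infty$, whence $y_n\geq x_n\to\infty$. Thus given any $T>0$ we may pick $n\geq N_0$ with $2y_n>T$, and then $G_n$ is a nonempty subinterval of $(T,\infty)$ disjoint from $F+F$; so $F+F$ contains no half-line.

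The argument is essentially immediate once the covering is written down, so I do not expect a genuine obstacle. The only point that needs a line of care is checking that $y_n\to\infty$ — equivalently, that the gaps $G_n$ do not all accumulate at a finite point — and this is exactly where the geometric-growth content of \eqref{eq:counterxample:assmpt1} is used a second time.
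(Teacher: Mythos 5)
Your proof is correct and follows essentially the same route as the paper: the paper proves a $d$-fold generalization by partitioning the covering intervals into strata according to $\max(n_1,\ldots,n_d)$, and your case split on $\max(j,k)\leq n$ versus $\max(j,k)\geq n+1$ is exactly that decomposition for $d=2$, producing the same gap $(2y_n,\,x_0+x_{n+1})$. Your explicit verification that $y_n\to\infty$ (so the gaps escape to infinity and genuinely obstruct every half-line) is a point the paper leaves implicit, and it is a worthwhile addition.
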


One can generalize this to sums consisting of more than two sets, which may be distinct. We shall do that presently and derive Lemma~\ref{lem:counterCriterion} as a consequence of a more general statement.

\begin{definition}
For $F_1,\ldots,F_d \subseteq \bbR$, define
\begin{equation}
    \sum_{j=1}^d F_j = F_1 + \cdots + F_d
    = \left\{ \sum_{j=1}^d a_j : a_j \in F_j \ \forall 1 \le j \le d \right\}.
\end{equation}
\end{definition}

\begin{lemma} \label{lem:counterCriterionGen}
Suppose for $j=1,2,\ldots,d$
\begin{equation}
    F_j \subseteq \bigcup_{n=0}^\infty [x_{n,j},y_{n,j}]
\end{equation}
is contained in a union of closed, bounded intervals such that $x_{n,j} < y_{n,j} < x_{n+1,j}$ for all $n \geq 0$ and $j=1,2,\ldots,d$. If for some $N_0$, one has
\begin{equation}
    \sum_{j=1}^d y_{n,j} < \min_{k=1,2,\ldots,d}\left(x_{n+1,k} + \sum_{j\neq k}x_{0,j} \right) \quad \forall n \geq N_0.
\end{equation}
Then $\sum_{j=1}^d F_j$ does not contain a half-line.
\end{lemma}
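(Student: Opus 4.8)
The plan is to show that $\sum_{j=1}^d F_j$ misses a neighborhood of each point $\sum_j x_{n+1,j}$ for large $n$, so it cannot contain any half-line. First I would set $S = \sum_{j=1}^d F_j$ and observe that $S$ is contained in the Minkowski sum of the interval-unions, so every element $s \in S$ can be written as $s = \sum_{j=1}^d t_j$ with $t_j \in [x_{n_j,j}, y_{n_j,j}]$ for some indices $n_1,\dots,n_d \ge 0$. I would partition the possible ``index vectors'' $(n_1,\dots,n_d)$ according to their behavior: either all $n_j \le n$, or at least one $n_j \ge n+1$.

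In the first case, where $n_j \le n$ for all $j$, monotonicity gives $s \le \sum_{j=1}^d y_{n,j}$, which by the hypothesis is strictly less than $\min_k\bigl(x_{n+1,k} + \sum_{j\ne k} x_{0,j}\bigr)$; in particular $s < \sum_{j=1}^d x_{n+1,j}$ provided $n \ge N_0$ (here one uses $x_{0,j} \le x_{n+1,j}$). In the second case, pick an index $k$ with $n_k \ge n+1$; then $t_k \ge x_{n+1,k}$ and $t_j \ge x_{0,j}$ for every other $j$, so $s \ge x_{n+1,k} + \sum_{j\ne k} x_{0,j} \ge \min_k\bigl(x_{n+1,k}+\sum_{j\ne k}x_{0,j}\bigr)$. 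Thus, writing $m_n := \sum_{j=1}^d y_{n,j}$ and $M_n := \min_{k}\bigl(x_{n+1,k} + \sum_{j\ne k} x_{0,j}\bigr)$, the hypothesis for $n \ge N_0$ says $m_n < M_n$, and the two cases show that $S$ contains no point of the open interval $(m_n, M_n)$.

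To finish, I would note that for every $n \ge N_0$ the set $S$ has a gap $(m_n, M_n)$ of positive length $M_n - m_n > 0$, and these gaps are located at arbitrarily large positions: indeed $M_n \ge x_{n+1,1} + \sum_{j \ne 1} x_{0,j} \to \infty$ as $n \to \infty$ because $x_{n+1,1} \to \infty$ (the intervals $[x_{n,1},y_{n,1}]$ are disjoint and increasing). Hence $S$ cannot contain $[c,\infty)$ for any $c$, since such a half-line would have to contain the point $m_n < M_n$ of the gap once $m_n \ge c$; this proves $\sum_{j=1}^d F_j$ contains no half-line. Finally, I would derive Lemma~\ref{lem:counterCriterion} by specializing to $d = 2$ and $F_1 = F_2 = F$ with $x_{n,1} = x_{n,2} = x_n$, $y_{n,1} = y_{n,2} = y_n$: then $m_n = 2y_n$ and $M_n = \min(x_{n+1} + x_0, x_{n+1} + x_0) = x_{n+1} + x_0$, so the condition $m_n < M_n$ reads exactly $2y_n - x_0 < x_{n+1}$, matching \eqref{eq:counterxample:assmpt1}.

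I do not expect a genuine obstacle here; the argument is a clean case analysis. The only point requiring a little care is making sure the inequality $x_{0,j} \le x_{n+1,j}$ (used to compare $\sum_j y_{n,j}$ with $\sum_j x_{n+1,j}$, and implicitly to place the gap to the right of everything seen so far) is correctly invoked, and that the gap positions $M_n$ indeed diverge — both follow immediately from the disjointness/ordering hypothesis $x_{n,j} < y_{n,j} < x_{n+1,j}$.
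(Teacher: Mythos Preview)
Your argument is correct and is essentially the same as the paper's: the paper packages your two cases as sets $T_n^-$ (all indices $\le n$) and $T_n^+$ (some index $\ge n+1$), computes $\max T_n^- = \sum_j y_{n,j}$ and $\min T_n^+ = \min_k\bigl(x_{n+1,k}+\sum_{j\ne k}x_{0,j}\bigr)$, and obtains the same gap $(m_n,M_n)$. One small slip: since $M_n$ is a \emph{minimum} over $k$, you have $M_n \le x_{n+1,1}+\sum_{j\ne 1}x_{0,j}$, not $\ge$; to see the gaps escape to infinity, argue instead that $m_n=\sum_j y_{n,j}$ is strictly increasing, and if it were bounded then (by the hypothesis with any fixed $k$) every $y_{n,j}$ would be bounded and $S$ itself bounded.
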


\begin{proof}
Denote $F = \sum_{j=1}^d F_j$, and define $I_{n,j} = [x_{n,j},y_{n,j}]$. For $k =1,2,\ldots,d$ and $n \geq 0$, define the $(n,k)$-\emph{stratum} by
\[S_{n,k} = \bigcup_{\substack{0 \le n_1,n_2,\ldots,n_d \leq n \\ n_k = n}} \sum_{j=1}^d I_{n_j,j}.\]
For $n \geq 0$, define 
\[ S_n = \bigcup_{k=1}^d S_{n,k},
\quad T_n^- = \bigcup_{m=0}^n S_m, \quad T_n^+ = \bigcup_{m=n+1}^\infty S_m.  \]
and note that
\[ F \subseteq T_n^- \cup T_n^+ \quad \forall n \in \bbN.\]
Since \[\max(T_n^-)= \sum_{j=1}^d y_{n,j} \text{ and } \min(T_n^+) = \min_{k=1,2,\ldots,d}\left(x_{n+1,k} + \sum_{j\neq k}x_{0,j} \right),\]
our assumption \eqref{eq:counterxample:assmpt1} yields
\begin{equation}
    \emptyset \neq G_n := (\max T_n^-, \min T_n^+) \subseteq \bbR\setminus F\quad \forall n \geq N_0,
    \end{equation}
which suffices to show that $F$ does not contain a half-line.
\end{proof}

\begin{proof}[Proof of Lemma~\ref{lem:counterCriterion}]
This follows immediately from Lemma~\ref{lem:counterCriterionGen}.
\end{proof}

\begin{proof}[Proof of Theorem~\ref{t:main:counterex}]
Let $F$ be $a$-sparse, and write
\[F = \bigcup_{n=0}^\infty K_n\]
for an ordered fragmentation with $\dist(K_n,K_{n+1}) \geq a$. Writing $[x_n,y_n]$ for the convex hull of $K_n$, note that
\[\tilF \subseteq \bigcup_{n=0}^\infty [\widetilde x_n, \widetilde y_n]\]
with $\widetilde x_n = e^{rx_n}$, $\widetilde y_n =e^{ry_n}$. By assumption $ra\geq \log 2$, so we observe
\begin{align*}
    2\widetilde y_n -  \widetilde x_0
    & = 2e^{ry_n} - e^{rx_0} \\
    & \leq e^{r(y_n+a)} - e^{rx_0} \\
    & \leq e^{rx_{n+1}} -e^{rx_0} \\
    & =\widetilde x_{n+1} - e^{rx_0} \\
    & < \widetilde{x}_{n+1},
\end{align*}
in which the third line follows from sparsity. Thus the claim follows from Lemma~\ref{lem:counterCriterion}.
\end{proof}

\begin{definition}
Given constants $A,a>0$, let $F(A,a)$ denote a union of intervals of length $A$ separated by a uniform distance of $a$ between consecutive intervals, that is,
\[ F(A,a) = \bigcup_{n=0}^\infty [n(A+a),n(A+a)+A].  \]
\end{definition}

This can be used to show that the bound $ra \geq \log 2$ is sharp for constructing counterexamples that do not contain a half-line.

\begin{proposition} \label{prop:erxPhaseTransition}
 Let $A,a,r>0$ and $d \geq 2$ be given, and consider $g(x)=e^{rx}$ and $\tilF = g[F(A,a)]$. 
 \begin{enumerate}[{\rm(a)}]
\item If
 \begin{equation}
     ra \geq \log(2),
 \end{equation}
 then $\tilF+\tilF$ does not contain a half-line.
 \item If
 \begin{equation}
     ra<\log(2)
 \end{equation}
 and $A$ is sufficiently large, then $\tilF+\tilF$ contains a half-line.
 \end{enumerate}
\end{proposition}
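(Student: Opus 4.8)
Part (a) is essentially a corollary of Theorem~\ref{t:main:counterex}: the set $F(A,a)$ is manifestly $a$-sparse (indeed, each gap has length exactly $a$), so if $ra \geq \log 2$ then $\tilF + \tilF$ does not contain a half-line by direct application of that theorem. I would simply note this and move on.

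For part (b), the plan is to realize $\tilF$ as an ``eventually thick'' set in the sense needed to invoke a sum-of-Cantor-sets argument in the spirit of the proof of Theorem~\ref{t:AbstracBSCgen}, except that here the fragments $\tilK_n = g[[n(A+a), n(A+a)+A]]$ are honest closed intervals, so $\tau(\tilK_n) = \infty$ and the thickness hypothesis is free. What must be checked are the two metric conditions that let Lemma~\ref{l.sumofcs} (or rather Remark~\ref{r.cplusc} for the self-sum, which is trivial for intervals) assemble $J_n := \tilK_n + \tilK_n$ and $J_n' := \tilK_n + \tilK_{n+1}$ into a half-line: namely that consecutive intervals $J_n, J_n', J_{n+1}$ overlap. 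Writing $\widetilde x_n = e^{rn(A+a)}$ and $\widetilde y_n = e^{r(n(A+a)+A)} = \widetilde x_n e^{rA}$, one has $J_n = [2\widetilde x_n, 2\widetilde y_n]$, $J_{n+1} = [2\widetilde x_{n+1}, 2\widetilde y_{n+1}]$, and $J_n' = [\widetilde x_n + \widetilde x_{n+1}, \widetilde y_n + \widetilde y_{n+1}]$. The conditions $J_n \cap J_n' \neq \emptyset$ and $J_n' \cap J_{n+1}\neq\emptyset$ reduce, after dividing through by $\widetilde x_n$ and setting $\rho = e^{r(A+a)}$, $\sigma = e^{rA}$ (so $\rho = \sigma e^{ra}$ and $\widetilde x_{n+1}/\widetilde x_n = \rho$, $\widetilde y_n/\widetilde x_n = \sigma$), to the two scale-invariant inequalities
\begin{align*}
    2\sigma &\geq 1 + \rho, \\
    \sigma + \sigma\rho &\geq 2\rho.
\end{align*}
The first rearranges to $2\sigma - 1 \geq \rho = \sigma e^{ra}$, i.e. $e^{ra} \leq 2 - \sigma^{-1} = 2 - e^{-rA}$, which holds once $A$ is large enough precisely because $e^{ra} < 2$. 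The second rearranges to $\sigma(1+\rho) \geq 2\rho$, i.e. $1 + \rho \geq 2\rho/\sigma = 2e^{ra}$, i.e. $e^{r(A+a)} \geq 2e^{ra} - 1$, which again holds for all large $A$ since the left side grows without bound. Hence for $A$ sufficiently large (depending on $r$ and $a$) both overlap conditions hold for every $n \geq 0$, the union $\bigcup_{n\geq 0}(J_n \cup J_n')$ is a half-line contained in $\tilF + \tilF$, and we are done.

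The only mild subtlety — and the place I would be most careful — is bookkeeping the two overlap inequalities correctly and confirming they are simultaneously satisfiable for large $A$ exactly when $ra < \log 2$; the first inequality is the binding one, and it is no coincidence that its threshold is $e^{ra} < 2$, mirroring the phase transition. Everything else is elementary: no gap lemma machinery beyond the triviality that a sum of two intervals is an interval is actually needed here, since all fragments are intervals. I would also remark that this shows the sharpness claim made after Theorem~\ref{t:main:counterex}, since $F(A,a)$ is $a$-sparse.
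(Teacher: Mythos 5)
Your proposal is correct and follows essentially the same route as the paper: part (a) via Theorem~\ref{t:main:counterex} and $a$-sparsity, and part (b) by checking the two overlap inequalities for $J_n$, $J_n'$, $J_{n+1}$, which after your normalization by $\widetilde x_n$ are exactly the paper's conditions $e^{-rA}+e^{ra}<2$ and $e^{-ra}+e^{rA}>2$. Your observation that the first inequality is the binding one (and is where the threshold $e^{ra}<2$ enters) matches the paper's remark on the location of the phase transition.
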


\begin{proof}
(a)  Assume  that $ra \geq \log(2)$. Since $F(A,a)$ is clearly $a$-sparse, this follows from Theorem~\ref{t:main:counterex}.
\medskip

(b) On the other hand, suppose $ra<\log(2)$ and choose $A$ large enough that
\begin{equation} \label{eq:phasetransition:Alargecond}
    e^{-rA}+e^{ra}<2, \qquad e^{-ra}+e^{rA} > 2.
\end{equation} 
Define $x_n = n(A+a)$, $y_n = n(A+a)+A$, $\widetilde x_n = e^{rx_n}$, and $\widetilde y_n = e^{ry_n}$ so that \[F = \bigcup_{n=0}^\infty [x_n,y_n], \quad \tilF = \bigcup_{n=0}^\infty [\widetilde x_n, \widetilde y_n]\]
Observe that $\tilF+\tilF$ contains the intervals $J_n = [2\widetilde x_n,2 \widetilde y_n]$ and $J_n'=[\widetilde x_n + \widetilde x_{n+1}, \widetilde y_n + \widetilde y_{n+1}]$. Observe that \eqref{eq:phasetransition:Alargecond} yields
\begin{align}
\nonumber
    2\widetilde y_n
    & = 2e^{rn(A+a)+rA} \\
    \nonumber
    & > (e^{-rA}+e^{ra})e^{rn(A+a)+rA} \\
    \nonumber
    & = e^{rn(A+a)} + e^{r(n+1)(A+a)} \\
    \label{eq:phasetransition:halfline1}
    & =\widetilde x_n + \widetilde x_{n+1}.
\end{align}
Similarly, \eqref{eq:phasetransition:Alargecond} gives
\begin{align}
\nonumber
\widetilde y_{n+1} + \widetilde y_n
& = e^{r(n+1)(A+a)+rA}+e^{rn(A+a)+rA} \\
\nonumber
&  = e^{r(n+1)(A+a)}(e^{rA}+e^{-ra}) \\
\nonumber
& > 2e^{r(n+1)(A+a)} \\
\label{eq:phasetransition:halfline2}
& = 2 \widetilde x_{n+1}.
\end{align}
Thus, $\tilF +\tilF$ contains
\[ \bigcup_n J_n \cup J_n', \]
which contains a half-line in view of \eqref{eq:phasetransition:halfline1} and \eqref{eq:phasetransition:halfline2}. 
\end{proof}

\bibliographystyle{abbrv} 
\bibliography{FTbib}

\begin{thebibliography}{10}

\bibitem{Astels2000TAMS}
S.~Astels.
\newblock Cantor sets and numbers with restricted partial quotients.
\newblock {\em Trans. Amer. Math. Soc.}, 352(1):133--170, 2000.

\bibitem{BaaGri2013Vol1}
M.~Baake and U.~Grimm.
\newblock {\em Aperiodic order. {V}ol. 1}, volume 149 of {\em Encyclopedia of
  Mathematics and its Applications}.
\newblock Cambridge University Press, Cambridge, 2013.
\newblock A mathematical invitation, With a foreword by Roger Penrose.

\bibitem{BaaMoo2000}
M.~Baake and R.~V. Moody, editors.
\newblock {\em Directions in mathematical quasicrystals}, volume~13 of {\em CRM
  Monograph Series}.
\newblock American Mathematical Society, Providence, RI, 2000.

\bibitem{Bel1990}
J.~Bellissard.
\newblock Spectral properties of {S}chr\"{o}dinger's operator with a
  {T}hue-{M}orse potential.
\newblock In {\em Number theory and physics ({L}es {H}ouches, 1989)}, volume~47
  of {\em Springer Proc. Phys.}, pages 140--150. Springer, Berlin, 1990.

\bibitem{BelBovGhe1991}
J.~Bellissard, A.~Bovier, and J.-M. Ghez.
\newblock Spectral properties of a tight binding {H}amiltonian with period
  doubling potential.
\newblock {\em Comm. Math. Phys.}, 135(2):379--399, 1991.

\bibitem{BelIocScoTes1989CMP}
J.~Bellissard, B.~Iochum, E.~Scoppola, and D.~Testard.
\newblock Spectral properties of one-dimensional quasi-crystals.
\newblock {\em Comm. Math. Phys.}, 125(3):527--543, 1989.

\bibitem{DFG2014AHP}
D.~Damanik, J.~Fillman, and A.~Gorodetski.
\newblock Continuum {S}chr\"{o}dinger operators associated with aperiodic
  subshifts.
\newblock {\em Ann. Henri Poincar\'{e}}, 15(6):1123--1144, 2014.

\bibitem{DFG2021JFA}
D.~Damanik, J.~Fillman, and A.~Gorodetski.
\newblock Multidimensional {S}chr\"{o}dinger operators whose spectrum features
  a half-line and a {C}antor set.
\newblock {\em J. Funct. Anal.}, 280(7):Paper No. 108911, 38, 2021.

\bibitem{DamanikGorodetski2011CMP}
D.~Damanik and A.~Gorodetski.
\newblock Spectral and quantum dynamical properties of the weakly coupled
  {F}ibonacci {H}amiltonian.
\newblock {\em Comm. Math. Phys.}, 305(1):221--277, 2011.

\bibitem{DamGor2012GAFA}
D.~Damanik and A.~Gorodetski.
\newblock The density of states measure of the weakly coupled {F}ibonacci
  {H}amiltonian.
\newblock {\em Geom. Funct. Anal.}, 22(4):976--989, 2012.

\bibitem{DamGorYes2016Invent}
D.~Damanik, A.~Gorodetski, and W.~Yessen.
\newblock The {F}ibonacci {H}amiltonian.
\newblock {\em Invent. Math.}, 206(3):629--692, 2016.

\bibitem{DamLen2006DMJ}
D.~Damanik and D.~Lenz.
\newblock A condition of {B}oshernitzan and uniform convergence in the
  multiplicative ergodic theorem.
\newblock {\em Duke Math. J.}, 133(1):95--123, 2006.

\bibitem{HMTpreprint}
P.~Hege, M.~Moscolari, and S.~Teufel.
\newblock Finding spectral gaps in quasicrystals.
\newblock {\em preprint}, 2022.
\newblock arXiv:2205.10622.

\bibitem{HelMoh98}
B.~Helffer and A.~Mohamed.
\newblock Asymptotic of the density of states for the {S}chr\"{o}dinger
  operator with periodic electric potential.
\newblock {\em Duke Math. J.}, 92(1):1--60, 1998.

\bibitem{Hof1993JSP}
A.~Hof.
\newblock Some remarks on discrete aperiodic {S}chr\"{o}dinger operators.
\newblock {\em J. Statist. Phys.}, 72(5-6):1353--1374, 1993.

\bibitem{Hof1995JSP}
A.~Hof.
\newblock A remark on {S}chr\"{o}dinger operators on aperiodic tilings.
\newblock {\em J. Statist. Phys.}, 81(3-4):851--855, 1995.

\bibitem{Karp97}
Y.~Karpeshina.
\newblock {\em Perturbation theory for the {S}chr\"{o}dinger operator with a
  periodic potential}, volume 1663 of {\em Lecture Notes in Mathematics}.
\newblock Springer-Verlag, Berlin, 1997.

\bibitem{KLS2003CMP}
S.~Klassert, D.~Lenz, and P.~Stollmann.
\newblock Discontinuities of the integrated density of states for random
  operators on {D}elone sets.
\newblock {\em Comm. Math. Phys.}, 241(2-3):235--243, 2003.

\bibitem{LenSeiSto2014JDE}
D.~Lenz, C.~Seifert, and P.~Stollmann.
\newblock Zero measure {C}antor spectra for continuum one-dimensional
  quasicrystals.
\newblock {\em J. Differential Equations}, 256(6):1905--1926, 2014.

\bibitem{LenzStol2003OAAMP}
D.~Lenz and P.~Stollmann.
\newblock Delone dynamical systems and associated random operators.
\newblock In {\em Operator algebras and mathematical physics ({C}onstan\c{t}a,
  2001)}, pages 267--285. Theta, Bucharest, 2003.

\bibitem{LenzStol2005JAM}
D.~Lenz and P.~Stollmann.
\newblock An ergodic theorem for {D}elone dynamical systems and existence of
  the integrated density of states.
\newblock {\em J. Anal. Math.}, 97:1--24, 2005.

\bibitem{LiuQu2015CMP}
Q.~Liu and Y.~Qu.
\newblock On the {H}ausdorff dimension of the spectrum of the {T}hue-{M}orse
  {H}amiltonian.
\newblock {\em Comm. Math. Phys.}, 338(2):867--891, 2015.

\bibitem{LiuQuYao2017JSP}
Q.~Liu, Y.~Qu, and X.~Yao.
\newblock Unbounded trace orbits of {T}hue-{M}orse {H}amiltonian.
\newblock {\em J. Stat. Phys.}, 166(6):1509--1557, 2017.

\bibitem{McDTaylor2021}
A.~McDonald and K.~Taylor.
\newblock Finite point configurations in products of thick cantor sets and a
  robust nonlinear newhouse gap lemma, 2021.
\newblock arXiv:2111.09393.

\bibitem{Newhouse1970Proc}
S.~E. Newhouse.
\newblock Nondensity of axiom {${\rm A}({\rm a})$} on {$S^{2}$}.
\newblock In {\em Global {A}nalysis ({P}roc. {S}ympos. {P}ure {M}ath., {V}ol.
  {XIV}, {B}erkeley, {C}alif., 1968)}, pages 191--202. Amer. Math. Soc.,
  Providence, R.I., 1970.

\bibitem{Newhouse1979PMIHES}
S.~E. Newhouse.
\newblock The abundance of wild hyperbolic sets and nonsmooth stable sets for
  diffeomorphisms.
\newblock {\em Inst. Hautes \'{E}tudes Sci. Publ. Math.}, pages 101--151, 1979.

\bibitem{PalisTakens1993}
J.~Palis and F.~Takens.
\newblock {\em Hyperbolicity and sensitive chaotic dynamics at homoclinic
  bifurcations}, volume~35 of {\em Cambridge Studies in Advanced Mathematics}.
\newblock Cambridge University Press, Cambridge, 1993.
\newblock Fractal dimensions and infinitely many attractors.

\bibitem{Parn2008AHP}
L.~Parnovski.
\newblock Bethe-{S}ommerfeld conjecture.
\newblock {\em Ann. Henri Poincar\'{e}}, 9(3):457--508, 2008.

\bibitem{PopSkr81}
V.~N. Popov and M.~M. Skriganov.
\newblock Remark on the structure of the spectrum of a two-dimensional
  {S}chr\"{o}dinger operator with periodic potential.
\newblock {\em Zap. Nauchn. Sem. Leningrad. Otdel. Mat. Inst. Steklov. (LOMI)},
  109:131--133, 181, 183--184, 1981.
\newblock Differential geometry, Lie groups and mechanics, IV.

\bibitem{SBGC1984PRL}
D.~Shechtman, I.~Blech, D.~Gratias, and J.~V. Cahn.
\newblock Metallic phase with long-range orientational order and no
  tranlational symmetry.
\newblock {\em Phys. Rev. Lett.}, 53:1951--1953, 1984.

\bibitem{SimonTaylor2020MPCPS}
K.~Simon and K.~Taylor.
\newblock Interior of sums of planar sets and curves.
\newblock {\em Math. Proc. Cambridge Philos. Soc.}, 168(1):119--148, 2020.

\bibitem{Skr79}
M.~Skriganov.
\newblock Proof of the {B}ethe--{S}ommerfeld conjecture in dimension two.
\newblock {\em Soviet Math. Dokl.}, 20:89--90, 1979.

\bibitem{Skr84}
M.~Skriganov.
\newblock Geometric and arithmetic methods in the spectral theory of
  multidimensional periodic operators.
\newblock {\em Proc. Steklov Math. Inst.}, 171:3--122, 1984.

\bibitem{Skr85}
M.~Skriganov.
\newblock The spectrum band structure of the three-dimensional {S}chr\"odinger
  operator with periodic potential.
\newblock {\em Inv. Math.}, 80:107--121, 1985.

\bibitem{Suto1987CMP}
A.~S\"{u}t\H{o}.
\newblock The spectrum of a quasiperiodic {S}chr\"{o}dinger operator.
\newblock {\em Comm. Math. Phys.}, 111(3):409--415, 1987.

\bibitem{Suto1989JSP}
A.~S\"{u}t\H{o}.
\newblock Singular continuous spectrum on a {C}antor set of zero {L}ebesgue
  measure for the {F}ibonacci {H}amiltonian.
\newblock {\em J. Statist. Phys.}, 56(3-4):525--531, 1989.

\bibitem{Vel88}
O.~Veliev.
\newblock Spectrum of multidimensional periodic operators.
\newblock {\em Teor. Funktsi\u\i\ Funktsional.\ Anal.\ i Prilozhen}, 49:17--34,
  1988.

\bibitem{Yavicoli2021IJM}
A.~Yavicoli.
\newblock Patterns in thick compact sets.
\newblock {\em Israel J. Math.}, 244(1):95--126, 2021.

\bibitem{Yavicoli2022preprint}
A.~Yavicoli.
\newblock Thickness and a gap lemma in $\mathbb{R}^d$, 2022.
\newblock arXiv:2204.08428.

\bibitem{Yu2020}
H.~Yu.
\newblock Fractal projections with an application in number theory, 2020.
\newblock arXiv:2004.05924.

\end{thebibliography}

\end{document}